\numberwithin{equation}{section}
\newtheorem{theorem}{Theorem}[section]
\newtheorem{proposition}[theorem]{Proposition}
\newtheorem{lemma}[theorem]{Lemma}
\newtheorem{claim}[theorem]{Claim}
\theoremstyle{definition}
\newtheorem{definition}[theorem]{Definition}
\newtheorem{example}[theorem]{Example}
\theoremstyle{remark}
\renewcommand{\H}{\mathbb{H}}
\newcommand{\N}{\mathbb{N}}
\newcommand{\R}{\mathbb{R}}
\newcommand{\Z}{\mathbb{Z}}
\newcommand{\frakg}{{\mathfrak g}}
\renewcommand{\hat}{\widehat}
\newcommand{\eps}{\varepsilon}
\newcommand{\scriptA}{\mathcal{A}}
\newcommand{\scriptC}{\mathcal{C}}
\newcommand{\scriptI}{\mathcal{I}}
\newcommand{\scriptJ}{\mathcal{J}}
\newcommand{\scriptL}{\mathcal{L}}
\newcommand{\scriptM}{\mathcal{M}}
\newcommand{\scriptO}{\mathcal{O}}
\newcommand{\scriptP}{\mathcal{P}}
\newcommand{\scriptV}{\mathcal{V}}
\newcommand{\scriptX}{\mathcal{X}}
\title{Incidences among flows}
\author{Kaiyi Huang, Betsy Stovall, and Sarah Tammen}
\begin{document}

\begin{abstract}
We consider two incidence problems for integral curves of vector fields.  The first is an analogue of the Euclidean joints problem, in which lines are replaced by integral curves of smooth vector fields taken from some finite-dimensional set.  The second is a bilinear and more rigid version of the first, in which we have two vector fields and one family of integral curves tangent to each and wish to know how many intersecting pairs are possible.    In both cases, a curvature condition of H\"ormander makes possible nontrivial bounds on the number of incidences in terms of the number of curves.    
\end{abstract}

\maketitle

\section{Introduction} \label{S:Intro}

In this article, we consider variations on two well-known  results in geometric combinatorics, the Szemer\'edi--Trotter theorem, \cite{szemeredi1983extremal}, bounding the number of incidences between lines and points in the plane, and the joints theorem, \cite{guth2015erdHos, kaplan2010lines, quilodran2010joints}, bounding the number of $n$-fold transversal intersections of lines in $\R^n$.  Namely, we are interested in discrete analogues of $L^p$-boundedness results, both linear and multilinear, for certain \textit{generalized Radon transforms}, for which both curvature and transversality play a crucial role.   

To provide the analytic motivation for our results, let us consider a family of smooth submersions $\pi_1,\ldots,\pi_k:\R^n \to \R^{n-1}$.  These maps give rise to a multilinear form,
\begin{equation} \label{E:define M}
\scriptM(f_1,\ldots,f_k) := \int \prod_{j=1}^k f_j \circ \pi_j(x)\, dx,
\end{equation}
and we can ask for which exponents $p_1,\ldots,p_k$ does an inequality
\begin{equation} \label{E:bound M}
|\scriptM(f_1,\ldots,f_k)| \lesssim \prod_{j=1}^k \|f_j\|_{L^{p_j}}
\end{equation}
hold, uniformly over $f_j$ in the space of smooth functions with compact support in (say) the unit ball.  This question has largely been settled, \cite{TaoWright, stovall2011p, CDSS}, and the answer depends on whether and how certain vector fields tangent to the fibers $\pi_j^{-1}\{y_j\}$ and their iterated Lie brackets span the tangent space to $\R^n$ at each point.  
Boundedness of such a generalized Radon transform has a natural discrete analogue, most easily seen in the case of \eqref{E:bound M} when the $f_j$ are characteristic functions, $f_j = \chi_{E_j}$.   In this case, the left-hand side of \eqref{E:bound M} equals the measure of the incidence set
$$
\Omega:=\bigcap_{j=1}^k \pi_j^{-1}(E_j) = 
\{x : x \in \bigcap_{j=1}^k \pi_j^{-1}(y_j), \, \text{for some}\, y_j \in E_j, 1 \leq j \leq k\},
$$
and the right-hand side of \eqref{E:bound M}  equals the product of the measures of each of the sets of curves, $\{\pi_j^{-1}(y_j) : y_j \in E_j\}$, after projection.  We may naturally discretize this problem by replacing Lebesgue measure with counting measure on both sides.  Noting that the pre-image $\pi_j^{-1}(y_j)$ of a point under a submersion $\pi_j:\R^n \to \R^{n-1}$ may be locally represented as an integral curve of a vector field $X_j$ annhilated by $D\pi_j$, we arrive at an incidence problem for flows.  In the discretized case, it is natural to replace rigid families of curves by perturbations, which leads to an analogue of the joints problem, in which we ask how many transversal $k$-fold intersections can exist among a family of integral curves of vector fields lying in the span of $X_1,\ldots,X_k$.  As will be seen, both of these discrete variants of \eqref{E:bound M} include as sub-cases several widely studied questions in extremal combinatorics. 

In the continuum case, nontrivial estimates for the localized version of the problem (in which integration in \eqref{E:define M} is taken over a ball) are available \cite{TaoWright, stovall2011p} precisely when the vector fields obey a higher-order transversality hypothesis known as the H\"ormander condition, which we now state.
\begin{definition} \label{D:Hormander}
Let $\scriptA$ be a collection of smooth vector fields on $\R^n$.  We say that $\scriptA$ obeys the H\"ormander condition of order $N$ on $\R^n$ if the iterated Lie brackets of the form $[X_1,[X_2,[\cdots,[X_{m-1},X_m]\cdots]]]$, with $m \leq N$ and $X_j \in \scriptA$, span the tangent space to $\R^n$ at every point.   
\end{definition}
Our first result is a simple analogue of the joints theorem on $\R^n$, with transversality replaced by the H\"ormander condition.

\begin{theorem}[Joints arising from flows] \label{T:joints}
Let $\scriptV \subseteq \scriptX(\R^n)$ be a linear subspace of the set of smooth vector fields on $\R^n$.  Assume that the flow, $(t,x) \mapsto e^{tX}(x)$, of each element $X \in \scriptV$ is a polynomial of degree at most $N$ in $(t,x)$ and that $\scriptV$ obeys the H\"ormander condition of order $N$ on $\R^n$.  Let  $\scriptL$ be a finite collection of integral curves $\{e^{tX}(x_0) : t \in \R\}$ of vector fields $X \in\scriptV$.  We say that $p \in \R^n$ is a joint of $\scriptV$ if there exist $X_1,\ldots,X_k \in \scriptV$, whose span is $\scriptV$, such that $\{e^{tX_j}(p) : t \in \R\} \in \scriptL$, for $1 \leq j \leq k$.  If $\scriptJ$ is the set of joints formed by elements of $\scriptL$, then
\begin{equation} \label{E:joints}
\#\scriptJ \lesssim \# \scriptL^{\frac{n}{n-1}},
\end{equation}
where the implicit constant depends on $n,N$.  
\end{theorem}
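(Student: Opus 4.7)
The plan is to adapt the polynomial-method proof of the classical joints theorem of Quilodr\'an and Kaplan--Sharir--Shustin, inserting a Lie-bracket bootstrap so that the H\"ormander condition plays the role of linear transversality.

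I would argue by induction on $\#\scriptL$: it suffices to produce a curve $\ell \in \scriptL$ containing at most $C \#\scriptJ^{1/n}$ joints for a constant $C = C(n,N)$, since removing $\ell$ and the joints on it decreases $\#\scriptJ$ by at most $C\#\scriptJ^{1/n}$ and $\#\scriptL$ by one. Suppose for contradiction that every curve contains strictly more than $C \#\scriptJ^{1/n}$ joints. Let $P$ be a nonzero polynomial of minimal degree $D$ vanishing on $\scriptJ$; a standard dimension count gives $D \leq C_1(n)\, \#\scriptJ^{1/n}$. Parameterizing $\ell \in \scriptL$ as $t \mapsto e^{tX_\ell}(x_\ell)$, the restriction $P|_\ell$ is a polynomial in $t$ of degree at most $DN$; once $C$ is taken large enough relative to $N$, each curve has more than $DN$ joints, so $P \equiv 0$ on every curve of $\scriptL$.

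The heart of the argument is the inductive claim: for every $m \geq 0$ and every $X_{j_1},\dots,X_{j_m} \in \scriptV$, the polynomial $X_{j_1} \cdots X_{j_m} P$ vanishes identically on every curve of $\scriptL$. The base case $m=0$ was just established. Inductively, let $Q = X_{j_2} \cdots X_{j_{m+1}} P$, which by hypothesis vanishes on every curve. At any joint $q$ with spanning set $X_1^{(q)},\dots,X_k^{(q)} \in \scriptV$, the directional derivative $X_j^{(q)} Q(q)$ is taken along a curve on which $Q\equiv 0$, hence is $0$; by linearity, $X Q(q) = 0$ for every $X \in \scriptV$. In particular $X_{j_1} X_{j_2} \cdots X_{j_{m+1}} P$ vanishes at every joint. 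Because the flow of any $X \in \scriptV$ is polynomial of degree $\leq N$ in $(t,x)$, the components of $X$ are polynomials of degree $\leq N-1$, so each application of some $X_{j_i}$ raises the polynomial degree by at most $N-2$; thus $X_{j_1} \cdots X_{j_{m+1}} P$ has degree $\leq D + (m+1)(N-2)$. Its restriction to any curve of $\scriptL$ is a polynomial in $t$ of degree $\lesssim_N D$ (assuming $\#\scriptJ$ is large compared to a function of $N$, a regime to which the theorem reduces), and thus has fewer zeros than the number of joints on the curve once $C$ is chosen large enough. It therefore vanishes on every curve, closing the induction.

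Applying the claim at any joint $q$, any iterated Lie bracket $Y = [X_{i_1},[X_{i_2},[\cdots,[X_{i_{m-1}},X_{i_m}]\cdots]]]$ of length $m \leq N$ expands as a signed sum of products $X_{\sigma(1)} \cdots X_{\sigma(m)}$, so $YP(q) = 0$. Since $Y$ is itself a vector field, $YP(q) = Y(q) \cdot \nabla P(q)$, and the H\"ormander condition of order $N$ ensures that such $Y(q)$ span $T_q \R^n$, forcing $\nabla P(q) = 0$. Every $\partial_i P$ thus vanishes on $\scriptJ$; since $P$ is nonzero and $\scriptJ$ nonempty, some $\partial_i P$ is a nonzero polynomial of degree $< D$ vanishing on $\scriptJ$, contradicting the minimality of $D$. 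The main obstacle is the degree bookkeeping in the Lie-bracket iteration: $C$ must be chosen (depending on $N$) so that the pigeonhole step survives all $N$ rounds of differentiation, and the regime where $\#\scriptJ$ is at most a constant depending on $N$ must be treated separately as a trivial case.
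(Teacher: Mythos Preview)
Your proposal is correct and follows essentially the same approach as the paper: a minimal-degree polynomial vanishing on the joints, a vanishing-on-curves argument, an iterated differentiation bootstrap exploiting the joint condition, and then the H\"ormander condition to force $\nabla P=0$ and contradict minimality. The only cosmetic differences are that the paper packages the polynomial step as a separate lemma, uses the cruder multiplicative degree bound $\deg(XQ)\le N\deg Q$ (your additive bound $\deg Q+(N-2)$ is sharper but either suffices), and runs the reduction by removing curves with fewer than $m=\tfrac12\#\scriptJ/\#\scriptL$ joints rather than your induction on $\#\scriptL$ with threshold $C\#\scriptJ^{1/n}$; both reductions are standard and equivalent.
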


We note, in particular, that the Euclidean joints theorem, proved in \cite{guth2015erdHos, kaplan2010lines, quilodran2010joints}, concerns the special case when $k=n$ and $\scriptV$ is the collection of all constant coefficient vector fields.  In that well-known case, the elements of $\scriptL$ are lines, and a joint is the intersection point of a collection of lines whose directions span the tangent space to $\R^n$.  In the general case, our transversality condition is weaker and our curves more general,  yet we arrive at the same result.


We turn now to a more rigid variant, a direct analogue of \eqref{E:bound M} in the case $k=2$.

\begin{theorem}[Bounds for restricted multijoints from flows] \label{T:multijoint}
Let $\pi_1,\pi_2:\R^n \to \R^{n-1}$ be polynomial maps, and let $X_1,X_2$ be two vector fields on $\R^n$ with $X_j\pi_j \equiv 0$.  Assume that the iterated commutators of $X_1,X_2$ generate a nilpotent Lie algebra $\mathfrak g$ of step at most $N$, and assume further that the elements of $\mathfrak g$ have polynomial flows and that their directions span the tangent space of $\R^n$ at every point.  Then if $\scriptL_j$ is a finite collection of integral curves of $X_j$, $j=1,2$, and $\scriptI(\scriptL_1,\scriptL_2)$ is the set intersection points of pairs of curves in $\scriptL_1 \times \scriptL_2$,  we have the estimate
\begin{equation}\label{E:multijoint}
\#\scriptI(\scriptL_1,\scriptL_2) \lesssim \#\scriptL_1 + \#\scriptL_2 + (\#\scriptL_1\#\scriptL_2)^{\frac{n-1}{2n-3}},
\end{equation}
where the implicit constant depends on $n,N$.
\end{theorem}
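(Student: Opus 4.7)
The plan is to reduce the bilinear intersection count to a point-curve incidence problem in $\R^{n-1}$, and then invoke a Szemer\'edi--Trotter-type estimate.

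First, the polynomial flow hypothesis implies every curve in $\scriptL_1 \cup \scriptL_2$ is a polynomial (and hence real algebraic) curve of degree $O_N(1)$, so two curves from different families meet in $O_N(1)$ points by Bezout. Since distinct integral curves of the same vector field are disjoint, each $p \in \scriptI$ determines a unique pair $(\gamma_1, \gamma_2) \in \scriptL_1 \times \scriptL_2$ meeting at $p$, and $\#\scriptI$ is comparable (up to an $O_N(1)$ factor) to the number of intersecting pairs.

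Since $X_j\pi_j \equiv 0$, we parametrize $\scriptL_j = \{\pi_j^{-1}(y_j) : y_j \in E_j\}$ for finite sets $E_j \subset \R^{n-1}$, and a pair $(y_1, y_2) \in E_1 \times E_2$ is a meeting pair iff $(y_1, y_2) \in F(\R^n)$, where $F := (\pi_1,\pi_2) : \R^n \to \R^{2n-2}$. Fibering over the first coordinate yields, for each $y_1 \in \R^{n-1}$, the algebraic curve $C^{y_1} := \pi_2(\pi_1^{-1}(y_1)) \subset \R^{n-1}$ of degree $O_N(1)$ (an image of a polynomial curve under the polynomial map $\pi_2$), so that
\[ \#\scriptI \;\lesssim_N\; \sum_{y_1 \in E_1} |E_2 \cap C^{y_1}|. \]
This is a point-curve incidence count in $\R^{n-1}$ with $|E_1|$ curves and $|E_2|$ points. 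The incidence graph is moreover non-degenerate: any two distinct $p, p' \in \R^{n-1}$ lie on at most $O_N(1)$ common curves $C^{y_1}$, because such a $y_1$ must lie in $\pi_1(\pi_2^{-1}(p)) \cap \pi_1(\pi_2^{-1}(p'))$, the intersection of two algebraic curves of bounded degree in $\R^{n-1}$, which has size $O_N(1)$ by Bezout provided these curves do not coincide---a coincidence ruled out by the H\"ormander condition on the Lie algebra generated by $X_1, X_2$.

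Applying a Szemer\'edi--Trotter-type bound for points and bounded-degree algebraic curves in $\R^{n-1}$ under this non-degeneracy then yields
\[ \sum_{y_1 \in E_1} |E_2 \cap C^{y_1}| \;\lesssim_N\; |E_1| + |E_2| + (|E_1||E_2|)^{(n-1)/(2n-3)}, \]
as required. For $n = 3$, the target is $\R^2$ and this is the classical Pach--Sharir planar bound. \textbf{The main obstacle} lies in dimensions $n \geq 4$: standard Szemer\'edi--Trotter / Pach--Sharir / Solymosi--Tao estimates for curves and points in $\R^{n-1}$ produce exponents weaker than the sharp $(n-1)/(2n-3) = m/(2m-1)$ (with $m = n-1$). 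Obtaining the sharp exponent requires either exploiting the full symmetry $E_1 \leftrightarrow E_2$ of the configuration in combination with polynomial partitioning, or else arguing directly in the $(2n-2)$-dimensional parameter space via a semi-algebraic cell decomposition, in the spirit of the Fox--Pach--Sheffer--Suk--Zahl framework for semi-algebraic bipartite relations.
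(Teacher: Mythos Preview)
Your reduction to a point--curve incidence problem in $\R^{n-1}$ via $\pi_1$ is correct and is indeed the first move in the paper's proof as well. However, your argument stops precisely where the actual work begins: you yourself flag that for $n\geq 4$ no off-the-shelf Szemer\'edi--Trotter, Pach--Sharir, Solymosi--Tao, or Fox--Pach--Sheffer--Suk--Zahl bound produces the exponent $(n-1)/(2n-3)$, and you do not supply a replacement. (Concretely, the symmetric FPSSZ bound in $\R^{n-1}\times\R^{n-1}$ gives main term $(\#\scriptL_1\#\scriptL_2)^{(n-1)/n+\eps}$, strictly weaker than the target once $n>3$.) So as written this is not a proof for $n\ge 4$; it is a proof for $n=3$ together with a wish list.

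The paper does not treat the projected problem as a black-box incidence problem. Instead it proves a bespoke ``incidences in a variety'' estimate (Proposition~\ref{P:bilinear poly}): if $\scriptP_1=\pi_1(\scriptL_1)$ lies in the zero set of a degree-$D$ polynomial $q$ on $\R^{n-1}$, then $\#\scriptI\lesssim \#\scriptL_1 + D\#\scriptL_2$. The heart of that proposition is Lemma~\ref{L:X1X2Qell1}, which uses the \emph{nilpotency} hypothesis (not merely a $K_{2,2}$-type non-degeneracy) to show that $[X_1^{j_1}X_2^{k_1}\cdots X_1^{j_m}X_2^{k_m}(q\circ\pi_1)]\circ\ell_1$ has degree bounded by $K=\sum k_i$, independent of the $j_i$. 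This asymmetric degree control lets one bootstrap vanishing of $q\circ\pi_1$ and its iterated $X_1,X_2$ derivatives back and forth between the two curve families, ultimately forcing $\nabla q\equiv 0$ on $\scriptP_1$ via the H\"ormander condition. Proposition~\ref{P:bilinear poly} immediately yields the ``simple estimate'' $\#\scriptI\lesssim\#\scriptL_1+\#\scriptL_1^{1/(n-1)}\#\scriptL_2$ (and its symmetric twin), and then a single round of Guth--Katz polynomial partitioning in $\R^{n-1}$, with the simple estimate applied in each cell and Proposition~\ref{P:bilinear poly} on the wall, gives \eqref{E:multijoint}. Your proposal never uses nilpotency beyond bounding the degree of the curves, and that is the missing idea.
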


We can compare the results of Theorems~\ref{T:joints} and~\ref{T:multijoint} by noting that the homogeneous degree of the main term in \eqref{E:multijoint} is $1+\tfrac1{2n-3}$, which is significantly closer to 1 than the exponent $1+\frac1{n-1}$ from \eqref{E:joints}.

Finally, although we have stated our main results in the setting of Euclidean spaces, we also formulate and prove analogous results for vector fields on manifolds simply connected manifolds.  See Theorems \ref{T:joints in M} and \ref{T:multijoints in M} in Section~\ref{S:manifolds}.

\subsection*{Notation} In this article, we use the notation $A\lesssim B$ to mean that $A\leq CB$ for some finite constant $C>0$ that may depend on $N$.  We also use the notation $A\ll B$ to mean that $A\leq cB$ for some sufficiently small constant $c>0$ that may depend on $N$. 

\subsection*{Outline of paper}  In the next section, Section~\ref{S:Background}, we will give an overview of some motivating and related results from both the continuum and discrete settings and discuss some possible open questions suggested by these results.  In Section~\ref{S:Examples}, we will take a detailed look at some basic examples, and provide some counter-examples. In Section~\ref{S:Joints}, we will prove Theorem~\ref{T:joints} by generalizing the proof of the joints theorem from  \cite{quilodran2010joints} to utilize the H\"ormander condition rather than transversality.  Our proof of Theorem~\ref{T:multijoint} is inspired by the polynomial partitioning proof of the Szem\'eredi--Trotter theorem,\footnote{Various polynomial partitioning proofs of the Szem\'eredi--Trotter theorem were independently developed by multiple authors at essentially the same time.  We follow a version from Terence Tao's blog (credited therein to Netz Katz) \cite{taowebsite}; other versions may be found in \cite{guth2016polynomial, KMS}. While we do not intend to provide a comprehensive historical account, we would be remiss in not noting that these arguments have as ancestor the divide-and-conquer approach of \cite{clarkson1990combinatorial}.}
and, as such, our first step, carried out in Section~\ref{S:I in Variety}, is to prove an initial estimate on the number of incidences that can be contained in a degree $D$ variety.  We will then complete the proof of Theorem~\ref{T:multijoint} in Section~\ref{S:Multijoints}.   Finally, in Section~\ref{S:manifolds}, we state and prove generalizations of our main theorems in a more invariant setting.

\subsection*{Acknowledgements}
 We would like to thank Paul Pollack for a helpful conversation related to Example 3.1, Tony Carbery and Yannis Galanos for sharing the results in Galanos's thesis, and Larry Guth for a question about \cite{CDSS} that inspired this project.
 
This article is based on research carried out while the first and second authors were supported by NSF DMS-2246906; the first author was supported by a Bung-Fung Lee Torng Graduate Student Summer Fellowship and a UW-Madison Mathematics Hirschfelder Fellowship; the second author was supported by WARF; and the third author was supported by NSF DMS-2037851.  This work was also supported by a grant from the Simons Foundation [SFI-MPS-SFM-00011865, B.S.].

\section{Background and motivation} \label{S:Background}
\subsection{Continuum analogues}
Let $\pi_1,\dots,\pi_m:\R^n\longrightarrow\R^{n-1}$ be smooth (local) submersions, let $a:\R^n\longrightarrow[0,\infty)$ be a smooth cutoff function of compact support, and consider the multilinear form 
\begin{equation}\label{E:multilinear}
    \scriptM(f_1,\ldots,f_m):=\int_{\R^n}\displaystyle\prod_{j=1}^mf_j\circ\pi_j(x)a(x)dx.
\end{equation}
This is an example of a multilinear generalized Radon transform, and we consider its $L^p$ boundedness properties. As mentioned previously, bounding $\scriptM$ amounts to controlling the measure of the incidence set of the level curves of the $\pi_j$, and we can view these level curves as flows of vector fields $X_j$ annihilated by the $d\pi_j$, or, equivalently, under whose flow the $\pi_j$ are invariant.  (There is a natural way to construct these vector fields, namely, for each $j$, we take the wedge of the $n-1$ rows of $D\pi_j$ and then apply the Hodge-star operation obtain a vector field.)  

If there exists some positive codimensional submanifold $\Pi \subset\R^n$ such that the $X_j$ are all tangent to $\Pi$, then no ``interesting'' estimates are possible.  Indeed, since the projections $\pi_j(\Pi)$ all have the same codimension as $\Pi$, the condition $\tfrac1{p_1}+\cdots+\tfrac1{p_m}\leq 1$ is seen to be necessary.  However, this is precisely the condition under which \eqref{E:bound M} follows from H\"older's inequality.  Localizing, we obtain the same conclusion if the vector fields are merely tangent to $\Pi$ to infinite order at some point.  Thus we see the necessity of the H\"ormander condition, Definition~\ref{D:Hormander}, which we may think of as quantitative failure of the hypothesis of the Frobenius theorem.  

Conversely, by quantifying the ``order'' in H\"ormander's condition further, we may state an essentially sharp theorem in the continuum case.  For each finite sequence $w \in \{1,\ldots,m\}^k$, with $k \in \N$, we can form the iterated Lie bracket
$$
X_w:=[X_{w_1},[X_{w_2},[\ldots,[X_{w_{k-1}},X_{w_k}]\ldots]].
$$
We define $\deg w \in \Z_{\geq 0}^m$ to be the vector whose $j$-th entry equals the number of times that $j$ appears in $w$.  Under the H\"ormander condition, after shrinking the support of $a$ if needed (via a partition of unity), there exists some  $(w_1,\ldots,w_n)$ with $\det(X_{w_1},\ldots,X_{w_n}) \neq 0$.  We may then define exponents $(p_1,\ldots,p_m) \in [1,\infty]^m$ by
$$
p_j:=\frac{
\bigl(\sum_{i=1}^n\sum_{l=1}^m \deg_l w_i\bigr) - 1}{\sum_{i=1}^n \deg_j w_i}, \qquad 1 \leq j \leq m.
$$
Then the main results of \cite{stovall2011p, TaoWright} state that
\begin{equation} \label{E:multilinear exact}
|\scriptM(f_1,\ldots,f_m)| \lesssim \prod_{j=1}^m \|f_j\|_{L^{p_j+\eps}},
\end{equation}
for all $\eps>0$.  This result is sharp up to endpoints (after interpolation with other estimates of this form).  Under the hypotheses of Theorem~\ref{T:multijoint}, the $\eps$ loss can be removed when $k=2$ \cite{CDSS} or if the $f_j$ are characteristic functions \cite{gressman2009p, stovall2011p}. Special cases include the Loomis--Whitney inequality,
$$
\pi_j(x):=(x_1,\ldots,\widehat{x_j},\ldots,x_n), \qquad 1 \leq j \leq n,
$$
and convolution with arclength measure on the moment curve, for which we may take
$$
\pi_1(x',x_n) = x', \qquad \pi_2(x',x_n) = x'-(x_n,\ldots,x_n^{n-1}).  
$$

In their widest generality, these estimates have been proved using an iteration scheme, sometimes called the ``method of refinements,'' in which one forms a map from $\R^n$ to $\R^n$ by traveling first along a curve from one family, then, from each point of the curve, along a curve from another family, and so on.  The map is nearly one-to-one (thanks to the H\"ormander conditon), and growth of the Jacobian determinant (which typically vanishes at 0, but has certain derivatives nonzero) leads to the precise form of the estimate in \eqref{E:multilinear exact}.  In the discrete case, this map is essentially one-to-one (after some pruning), leading to nontrivial estimates, but with a worse exponent than that in Theorem~\ref{T:multijoint}.  The methods in this paper give an improved estimate over that approach, but, lacking an effective way of using the growth of the Jacobian, we are unable to match the exponents from the continuum case.

In particular, though we think of Theorem \ref{T:multijoint} as a discretized partial analogue of the results for the bilinear continuum case, the exponent $\frac{2n-2}{2n-3}$ arising in \eqref{E:multijoint} is larger than the sharp exponent in the continuum case unless $n=3$ and $X_1,X_2,X_{12}$ forms a spanning set of vector fields. Lacking a counter-example, it seems reasonable to ask whether the optimal exponents in \eqref{E:multijoint} arise from the endpoints of the Newton polytope of exponents of the corresponding Radon-like transforms, that is, whether the estimate
$$
\#\scriptI(\scriptL_1,\scriptL_2) \stackrel{?}{\lesssim} \#\scriptL_1 + \#\scriptL_2 + \#\scriptL_1^{1/p_1}\#\scriptL_2^{1/p_2} 
$$  
might hold, with $p_1,p_2$ corresponding to sharp exponents from the continuum case, though perhaps with a loss.  (We note that the continuum problem is made easier by raising the $p_j$, but the discrete problem is made easier by lowering the $p_j$.)  Another natural question is whether Theorem~\ref{T:multijoint} might be multilinearized, with $k$ families of curves rather than just 2.  (We will discuss a specific case for which this has been done in the next section.)  

Continuing the analogy with the continuum case, we can view Theorem~\ref{T:joints} as a discretized \textit{and perturbed} partial analogue of the fully multilinear continuum result. Thus, it is also natural to ask whether, when the vector field $\scriptV$ is generated by the vector fields corresponding to $\pi_1,\ldots,\pi_m$, \eqref{E:joints} might be improved to 
$$
\#\scriptJ \stackrel{?}{\lesssim} \#\scriptL^{\frac1{p_1}+\cdots+\frac1{p_m}}.
$$
However, in Subsection~\ref{SS:ST} we will show that this is not possible by providing a counter-example.  (That being said, it would still be natural to ask whether some improvement on \eqref{E:joints} might be possible when the dimension of $\scriptV$ is smaller than the ambient dimension.)

\subsection{A small sampling of prior discrete incidence bounds}  The combinatorial literature on point-curve and curve-curve incidences is vast, but seems not to directly address our questions, whose hypotheses are analytically, rather than combinatorially, motivated.  Here we provide a small sampling by way of comparison.  Before proceeding, we note that projection by $\pi_1$ transforms Theorem~\ref{T:multijoint} into a problem of counting incidences between a set of points $\scriptP := \pi_1(\scriptL_1)$ and a set of curves $\scriptC := \pi_1(\scriptL_2)$ in dimension $n-1$, and, likewise, Theorem~\ref{T:joints} can be projected (perhaps via some map naturally associated to the vector fields) to the problem of counting curve-curve incidences (under an appropriate definition of incidence).  We will look at some concrete examples in the next section.  

The point-curve incidence problem has been studied by Pach--Sharir \cite{PachSharir98} in dimension 2 and by Sharir--Solomon--Sheffer \cite{SharirSolomonSheffer16} in general dimensions under the hypothesis that the family of curves has finitely many degrees of freedom and finite multiplicity, conditions which reflect the number of points needed to determine a bounded number of curves, the number of intersection points of two curves, and, in higher dimensions, the number of curves that can lie in lower-dimensional varieties of bounded degree.  Related conditions also appear in 
\cite{sharir2017cutting,sharir2022incidences}.  However, the notions of degrees of freedom and multiplicity, while well-suited to a number of combinatorial problems, do not seem to reflect the full range of combinatorial constraints determined by the H\"ormander condition, and we can contrast the homogeneous degree of the main term in \eqref{E:multijoint}, which is $1+\frac1{2n-3}$, with that of the main theorem in \cite{SharirSolomonSheffer16}, which is $1+\frac{1}{n-1+1/(k-1)}$.

The question of what combinatorial hypothesis might both hold for vector fields obeying the nilpotency and H\"ormander hypotheses and also still lead to the estimates in our theorems, seems like an interesting one, which we have not yet explored.

In a more analytic vein, Iosevich--{\L}aba--Jorati \cite{iosevich2009geometric} have used Sobolev smoothing estimates for  averaging operators (which, by duality, can be expressed in the form \eqref{E:define M} with $m=2$) to obtain nontrivial point-curve (or, more generally, point-manifold) incidence results of a similar flavor to our Theorem~\ref{T:multijoint}, but only under fairly strong regularity conditions on the set of points and the set of curves.  

Likewise, a number of authors, motivated by questions in ergodic theory, have considered such problems in the setting of the integers.  Such results are typically of a significantly different flavor.  (See, for example, \cite{han2020improving}, and the references therein.)  Of course, working on the integers necessarily imposes some regularity and provides new, number theoretic, tools.  Moreover, these results concern $\ell^p$ inequalities, which must lack the linear terms appearing on the right of \eqref{E:multijoint}, and necessitating a larger total power (or else some other compensating factor).    

Our work is closest in spirit to, and directly motivated by, \cite{fassler2020planar}, which studies incidences of points within $\delta$-neighborhoods of $\delta$-separated lines in $\R^2$.  While we consider incidences of curves, rather than tubes, a higher dimensional analogue of the problem from \cite{fassler2020planar} is considered in forthcoming work of Yannis Galanos (personal communication).   

Finally, we reserve discussion of some very special cases, including discretizations of Loomis--Whitney, to the next section, where we can speak more concretely, in terms of examples.

\section{Examples, counter-examples, and questions} \label{S:Examples}

In this section, we will explore a few concrete examples in more detail to better connect this problem with prior results in the literature.  We begin with a case in which the continuum analogy works perfectly (and thus, perhaps, a bit too well).  

\subsection{Loomis--Whitney, Joints, and Multijoints} 

The Loomis--Whitney inequality states that if $0 \leq f_1, \dots, f_n \in L^1(\R^{n-1})$, then
\begin{equation} \label{E:LoomisWhitney}
\int \prod_{j=1}^n f_j^{\frac{1}{n-1}}(x_1,\ldots,\widehat{x_j},\ldots,x_n)\, dx \lesssim \prod_{j=1}^n \|f_j\|_1^{\frac1{n-1}}.
\end{equation}
Upon discretization, we have multisets of lines parallel to the coordinate directions (with multiplicities $f_j$), and we can ask whether 
\begin{equation} \label{E:elementary multijoints}
\sum_{p \in \scriptJ} m(p)^{\frac1{n-1}} \lesssim \prod_j \#\scriptL_j^{\frac1{n-1}},
\end{equation}
where $m(p)$ equals the multiplicity of $p$, i.e., the number of ways in which $p$ is a joint.  Indeed, inequality \eqref{E:LoomisWhitney} can be used directly to prove  \eqref{E:elementary multijoints} (and vice versa).  Furthermore, Ruixiang Zhang \cite{zhang2020proof} has proved that \eqref{E:elementary multijoints} holds for arbitrary multisets of lines in $\R^n$, not merely axis-parallel ones, and in three dimensions, we have even more, as Iliopoulou \cite{Iliopolou2015} has proved that in the 3D multijoints theorem, lines can be replaced with algebraic curves.

Thus the Loomis--Whitney inequality is quite stable under discretizations, both perturbed and not.  A natural question is whether this might be the case for more general inequalities of the form \eqref{E:bound M}.  
It is thus disappointing that neither the above-mentioned approach to the rigid discrete inequality \eqref{E:elementary multijoints}, nor the validity of \eqref{E:elementary multijoints} under perturbation seems to extend to the more general setting of \eqref{E:bound M}.  Indeed, the transference argument between  \eqref{E:elementary multijoints} in the rigid case and \eqref{E:LoomisWhitney} relies on the fact that incident lines have neighborhoods that are incident tubes, the shape and size of whose intersection does not depend on their location.  Worse, we will see that there can be no direct multijoints analogue of \eqref{E:bound M} in general.


\subsection{Szemer\'edi--Trotter and perturbations thereof}  \label{SS:ST}
Now we elaborate on the above-mentioned connection with Szemer\'edi--Trotter, which we first learned of through a manuscript of F\"assler--Orponen--Pinamonti \cite{fassler2020planar}.  Our setting is the first Heisenberg group, $\mathbb H^1 \simeq \R^3 \ni (x,y,t)$, with vector fields, 
\begin{align*}
    X &:= \tfrac{\partial}{\partial x} - \tfrac12 y \tfrac{\partial}{\partial t}\\
    Y &:= \tfrac{\partial}{\partial y} + \tfrac12 x \tfrac{\partial}{\partial t},
\end{align*}
having Lie bracket $[X,Y] = \tfrac{\partial}{\partial t}=:T$ and flows
$$
e^{sX}(x,y,t) = (x+s,y,t-\tfrac12 sy), \qquad e^{sY}(x,y,t) = (x,y+s,t+\tfrac12 sx),
$$
and annihilated by the projections
$$
\pi_X(x,y,t):=(y,t+\tfrac12xy), \qquad \pi_Y(x,y,t):=(x,t-\tfrac12xy),
$$
respectively.  Of course, $\pi_X$ projects integral curves of $X$ to points; it also projects integral curves of $Y$ to lines:
$$
\pi_X(x,y+s,t+\tfrac12 sx) = (y+s,t+sx +\tfrac12xy).  
$$
Thus the problem of counting incidences between integral curves of $X$ and integral curves of $Y$ in $\R^3$ maps exactly to the question of counting incidences between points and lines in the plane.  Moreover, we see that in three dimensions, Theorem~\ref{T:multijoint} is sharp (up to the precise behavior of the implicit constant).  (Indeed, in this special case, as noted previously, our proof of Theorem~\ref{T:multijoint} reduces to a known proof \cite{taowebsite} of the Szemer\'edi--Trotter theorem.)  

We now consider a perturbation, in which we wish to count incidences between integral curves of $X$ and integral curves of linear combinations of $X$ and $Y$.  For $\omega \in \mathbb S^1 \subseteq \R^2$, we set $X_\omega:= \omega_1 X + \omega_2 Y$, and see that 
\begin{gather*}
e^{sX_\omega}(x,y,t) = (x+\omega_1 s, y+\omega_2 s, t + \tfrac12(\omega_2 x s - \omega_1 sy)), \\ \pi_X(e^{sX_\omega}(x,y,t)) = (y+\omega_2 s, t+\tfrac12 xy + \omega_2 xs + \tfrac12 \omega_1\omega_2 s^2).
\end{gather*}
For $\omega = (\cos\theta,\sin\theta)$ and $\sin\theta \neq 0$, this parabola can be reparametrized as
\begin{equation} \label{E:parab param}
u \mapsto (y+u,t+\tfrac12 xy + xu + \tfrac12 \cot \theta u^2).  
\end{equation}
Thus, counting incidences between integral curves of $X$ and exponentiations of the $X_\omega$ (which are lines) projects to the problem of counting incidences between points and arbitrary parabolas in the plane.  

However, the direct analogue of the Szem\'eredi--Trotter estimate fails to bound the number of point-parabola incidences in the plane.  Indeed, let $\scriptP$ denote the collection of points in the array $[0,N] \times [0,3N^3]$, and let $\scriptC$ denote the collection of 
parabolas of the form
$$
p_{(a,b,c)}:u \mapsto (u,a+b u + c u^2),
$$
with $(a,b,c) \in \Z^3 \cap [0,N^3] \times [0,N^2] \times [0,N]$.  There are $N^6$ parabolas, each incident to $N$ points, yielding $N^7$ total incidences.  On the other hand, 
$$
(\#\scriptP \#\scriptC)^{\frac23} \sim N^{\frac{20}3},
$$
which is much smaller than $N^7$ for $N$ large.  From this example, we see that the bound in Theorem~\ref{T:multijoint} can fail under perturbations of the lines, in contrast with the case of Euclidean multijoints.  (To our knowledge, the best-known bound for the number of incidences of points and parabolas is 
\begin{equation} \label{E:PS-SZ}
\#\scriptI \lesssim \#\scriptP + \#\scriptC + \min\{\#\scriptP^{\frac 35}\#\scriptC^{\frac45},\#\scriptP^\frac{6}{11}\#\scriptC^{\frac{9}{11}+\eps}+(\#\scriptP\#\scriptC)^{\frac{2}{3}}\}
\end{equation}
due to Pach--Sharir \cite{PachSharir98} and Sharir--Zahl \cite{sharir2017cutting}.)    

Finally, we consider the fully general case:  $\scriptL$ is a collection of integral curves of vector fields of the form $X_\omega$.  Projecting via $\pi_1$ maps the problem of counting joints formed by elements of $\scriptL$ to counting tangential intersections of parabolas.  Theorem~\ref{T:joints} gives the estimate 
\begin{equation} \label{E:tangential parab}
\#\scriptJ \lesssim \#\scriptC^{\frac 32},
\end{equation}
which matches the estimate obtained by Ellenberg--Solymosi--Zahl in \cite{ellenberg2016new}.  

While we have no reason to think that \eqref{E:tangential parab} should be sharp, we now turn to an example showing that the exponent $\frac32$ cannot be replaced with $\frac43$, which we recall is the homogeneous degree of the main term in the unperturbed problem. (Comparing exponents with those in \eqref{E:PS-SZ}, we observe that $\frac32 > \frac35+\frac45  > \frac{6}{11}+\frac9{11}> \frac43=\frac23+\frac23$.)  Before giving our example, we note that the standard sharp example for Szem\'eredi--Trotter shows that $\#\scriptI$ can be as large as $\#\scriptL^{4/3}$; we will improve on that by a logarithmic factor.  

\begin{example}
\label{tangentialIntersections}
We consider again the set 
$$
\scriptC:=\{p_{(a,b,c)} : (a,b,c) \in \Z^3 \cap [0,N^3] \times [0,N^2] \times [0,N]\},
$$ 
defined above.  

We observe that two parabolas
$p_{(a,b,c)}$ and $p_{(a',b',c')}$ are incident (intersect tangentially) if and only if $p_{(a-a',b-b',c-c')}$ and $p_{(0,0,0)}$ (the $x$-axis) are incident, which holds if and only if $4(a-a')(c-c') = (b-b')^2$.  Thus, for each triple $(a,b,c)$ for which $4ac=b^2$, there are about $N^6$ incidences in $\scriptC$.  (Since there are $N^6$ ways to express a triple $(a,b,c)$ as a difference of parameters of two curves in $\scriptC$.)    

Now we turn to the problem of counting the number of solutions $(a,b,c)$ to $4ac=b^2$.   We observe that $4ac$ is square if and only if $4a/\psi(c)$ is a square integer, where $\psi(c)$ is the square-free part of $c$.  Thus the number of pairs $(a,c) \in [0,N^3] \times [0,N]$ such that $4ac$ is square is about
\begin{align*}
\sum_{c=1}^N &\#\{\text{squares in $[0,N^3/\psi(c)]$}\} 
\gtrsim
\sideset{}{'}\sum_{\psi=1}^N\#\{c : \psi(c)=\psi\}\bigl(\tfrac{N^3}{\psi}\bigr)^{1/2}\\
&\sim
\sideset{}{'}\sum_{\psi=1}^N\#\{\text{squares in $[0,N/\psi]$}\}\bigl(\tfrac{N^3}{\psi}\bigr)^{1/2}
=
\sideset{}{'}\sum_{\psi=1}^N N^2 \tfrac1{\psi},
\end{align*}
where the ${}'$ indicates that the sum is taken over square-free $\psi$.  We may lower-bound the sum of reciprocals of square-free integers via an elementary argument.\footnote{An earlier version of this manuscript gave  a double-logarithmic loss by using Mertens second theorem \cite{Mertens} and the fact that all primes are square-free.  We are grateful to Paul Pollack for pointing out to us the improved version presented here.}   Indeed, we have
\begin{align*}
\log N \sim \sum_{n=1}^N \tfrac1n \leq \bigl(\sum_{n=1}^\infty \tfrac1{n^2}\bigr)\bigl(\sideset{}{'}\sum_{\psi=1}^N \tfrac1\psi\bigr) \lesssim \sideset{}{'}\sum_{\psi=1}^N \tfrac1\psi.
\end{align*}
In all, the $N^6$ parabolas in $\scriptC$ can have at least $N^8 \log N$ incidences, much larger than $\#\scriptC^{4/3}$.
\end{example}

\subsection{Point-curve and curve-curve incidences in higher dimensions}  The following example is motivated by the operator given by convolution with  arclength measure on a curve $\gamma:\R \to \R^d$,
$$
Af(x) = \int_\R f(x+\gamma(t))\, dt.
$$
For simplicity, we consider the case $\gamma(t) = (t,\ldots,t^d)$.  By work of Christ \cite{christ1998convolution},if $E,F \subseteq\R^d$ are measurable sets, then the set of incidences between points of $E$ and translates of $\gamma$ by $F$, viewed as a subset of $\R^n:=\R^{d+1}$, has measure given by 
$$
\langle \chi_E,A^*\chi_F\rangle \lesssim |E|^{\frac1p}|F|^{\frac1q},
$$
for $(\tfrac1p,\tfrac1q)$ lying on the closed line segment joining the points
$$
\bigl(\frac{n^2-3n+4}{n(n-1)},\frac2n\bigr), \qquad \bigl(\frac2n,\frac{n^2-3n+4}{n(n-1)}\bigr).  
$$

We may reframe this problem as an incidence problem for flows via the lifting in \cite{TaoWright}.  Namely, 
 we define vector fields on $\R^n = \R^d \times \R \ni (x,t)$ by 
$$
X_1 = \tfrac{\partial}{\partial t}, \qquad X_2 = \tfrac{\partial}{\partial t} - \gamma'(t) \cdot \nabla_x.
$$
These are annhilated by
$$
\pi_1(x,t) = x, \qquad \pi_2(x,t) = x+\gamma(t),
$$
and have flows
$$
e^{sX_1}(x,t) = (x,t+s), \qquad e^{sX_2}(x,t) = (x+\gamma(t) - \gamma(t+s),t+s).
$$
After projection by $\pi_1$, Theorem~\ref{T:multijoint} bounds the number of incidences between a family $\scriptP_1$ of points in $\R^d$ and a family $\scriptC_1$ of translates of $\gamma$ by 
\begin{equation} \label{E:points moment curves}
\#\scriptI \lesssim \#\scriptP_1 + \#\scriptC_1 + (\#\scriptP_1 \#\scriptC_1)^{\frac{n-1}{2n-3}}.
\end{equation}
By comparison, results of Sharir--Solomon--Sheffer \cite{SharirSolomonSheffer16} yield the upper bound 
$$
\#\scriptI \lesssim \#\scriptP_1 + \#\scriptC_1 + \#\scriptP_1^{\frac2n}\#\scriptC_1^{\frac{n-1}{n}} + \rm{higher \,order \,terms},
$$
and we compare homogeneous degrees of the main terms, noting $\frac{2(n-1)}{2n-3} \leq \frac{n+1}n$, for $n \geq 3$, with strict inequality for $n \geq 4$. 
(To apply the results of \cite{SharirSolomonSheffer16}, one needs to verify that  the collection of translates of the moment curve has two degrees of freedom with multiplicity 1.)    In the special case $n=4$, the estimate in \eqref{E:points moment curves} can be shown to match that of Sharir--Solomon--Zlydenko \cite{sharir2022incidences}, but we believe our result is new in higher dimensions.  

We now translate the perturbed version of this problem,  Theorem~\ref{T:joints}, to this context.  We consider a finite collection of integral curves of linear combinations of $X_1,X_2$, and a joint is a point of  intersection of any two distinct curves.  Projecting via $\pi_1$, we obtain a family $\scriptC$ of curves in $\R^d$ parametrized as 
$$
\gamma_{a,y_0,t_0}(u)
=
\begin{cases}
    y_0 - a\gamma(t_0+u), \qquad &a \neq \infty\\
    y_0-\gamma'(t_0)u, \qquad &a = \infty.
\end{cases}
$$
A joint likewise projects to a  tangential intersection of a pair of such curves.   (A one-point curve is considered tangent to each curve intersecting it.)  Thus Theorem~\ref{T:joints} gives the estimate
$$
\#J \lesssim \#\scriptC^{\frac{d+1}d}.
$$
Though we expect that this bound is not sharp, we believe it to be new and note that it improves as $d$ increases.

\subsection{Restricted X-ray transforms}

Our next example comes from the restricted X-ray transform,
$$
Tf(x,t) = \int_\R f(s,x+s \gamma(t))\, ds,
$$
averaging $f:\R^{n-1} \to \R$ along the line with direction vector $(1,\gamma(t))$; here $\gamma:\R \to \R^{n-2}$ is a curve with nonvanishing torsion, such as the moment curve $\gamma(t) = (t,\ldots,t^{n-2})$.  

In $\R^n = \R^{n-2} \times \R \times \R \ni (x,s,t)$, we define vector fields
$$
X_1 := \tfrac{\partial}{\partial s}, \qquad X_2:= \tfrac{\partial}{\partial t} - s\gamma'(t) \cdot \nabla_x,
$$
which are annhilated by projections
$$
\pi_1(x,s,t) = (x,t), \qquad \pi_2(x,s,t) = (s,x+s\gamma(t)),
$$
and have integral curves
$$
e^{uX_1}(x,s,t) = (x,s+u,t), \qquad 
e^{uX_2}(x,s,t) = (x+s \gamma(t) - s\gamma(t+u),s,t+u),
$$
Noting that $\pi_2(e^{uX_1}(x_0,s_0,t_0)) = (s_0+u,x_0+(s_0+u)\gamma(t_0))$, projection maps the incidence problem in Theorem~\ref{T:multijoint} onto the incidence problem for points in $\R^{n-1}$ and the one-parameter family of lines in $\R^{n-1}$ whose direction vectors are contained in $\{(1,\gamma(t)):t \in \R\}$.  The bound in Theorem~\ref{T:multijoint} is 
$$
\#\scriptI \lesssim \#\scriptL_1 + \#\scriptL_2 + (\#\scriptL_1\#\scriptL_2)^{\frac{n-1}{2n-3}}.
$$

By contrast, the joints problem (for integral curves of any vector fields of the form) 
$$
\cos\theta_0 X_1  + \sin\theta_0 X_2
$$
is mapped via $\pi_2$ to the problem of counting tangential intersections of curves of the form
$$
w \mapsto (w,z_0 + \cot \theta_0 \Gamma(\tan\theta_0 w + a_0)),
$$
with $\theta_0 \in \R$ and $\Gamma$ an antiderivative of $\gamma$.

\section{Proof of Theorem~\ref{T:joints}, the generalized joints theorem} \label{S:Joints}

This section will primarily be devoted to the proof of Theorem~\ref{T:joints}, for which we will adapt the proof used by Quilodr\'an in \cite{quilodran2010joints}.  

Before proceeding to the proof, we remark that we cannot replace the criterion that at a joint, the vector fields generating the intersecting curves must span $\scriptV$, with the weaker condition that at a joint, the relevant vector fields obey the H\"ormander condition.  Informally, this is to be expected because one curve could arise as the flow of many different vector fields, but for concreteness, we provide an  example in the Heisenberg group $\H^1$.  Let us take $\scriptV$ to be the linear span of $\{X,Y,T=[X,Y]\}$, where $X=\partial_x-\frac{1}{2}y\partial_t,Y=\partial_y+\frac{1}{2}x\partial_t,T=\partial_t$. Then  every line can be realized as an integral curve of a left-invariant vector field in $\scriptV$. Furthermore, a pair of vector fields in $\scriptV$ obeys the H\"ormander condition if their orthogonal projections to the span of $\{X, Y\}$ are transverse. Letting $\scriptL$ consist of $2N$ lines in the $xy$-plane, with half parallel to the $x$-axis and half parallel to the $y$-axis, each intersecting pair obeys the H\"ormander condition, and there are $N^2 \sim \#\scriptL^2 $ intersecting pairs, which is comparable with the elementary estimate and much larger than $\#\scriptL^{\frac32}$.  

Now we turn to the proof of Theorem~\ref{T:joints}, for which the key step is the following lemma.  

\begin{lemma} \label{L:Quilodran}
Let $m$ be an integer and $\scriptL$ a set of integral curves of vector fields in $\scriptV$, and suppose that some nonempty subset $\emptyset \neq \scriptJ' \subseteq \scriptJ$ of the joints of $\scriptL$ has the property that $\#\ell \cap \scriptJ' \geq m$ whenever $\ell \in \scriptL$ and $\ell \cap \scriptJ' \neq \emptyset$.  Then $\#\scriptJ' \geq c_N m^n$.  
\end{lemma}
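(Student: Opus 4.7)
Following Quilodr\'an's polynomial-method proof of the classical joints theorem \cite{quilodran2010joints}, I will argue by contradiction. Suppose $\#\scriptJ' < c_N m^n$ for a small constant $c_N > 0$ depending only on $n$ and $N$. A standard dimension count on the space of polynomials of degree $\leq D$ on $\R^n$ (which has dimension $\binom{n+D}{n}\gtrsim_n D^n$) yields a nonzero polynomial $P$ of minimal degree $D \lesssim_n (\#\scriptJ')^{1/n}$ vanishing on $\scriptJ'$, and choosing $c_N$ sufficiently small forces $DN^N < m$. Each $\ell \in \scriptL$ meeting $\scriptJ'$ admits a polynomial parametrization of degree $\leq N$, so $P\circ\ell$ is a polynomial of degree $\leq ND < m$ in the parameter; with $\geq m$ roots it is identically zero, so $P$ vanishes on $\ell$.

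The goal is to show that $\nabla P(p)=0$ at every joint $p\in\scriptJ'$, for then each partial derivative $\partial_iP$ is a polynomial of degree strictly less than $D$ vanishing on $\scriptJ'$, and the non-constancy of $P$ yields a nontrivial such derivative, contradicting the minimality of $D$. At $p$, the spanning vector fields $X_1,\dots,X_k\in\scriptV$ give $X_jP(p)=0$ for every $j$, so $\nabla P(p)\in\scriptV(p)^\perp$; when the H\"ormander order exceeds one, however, $\scriptV(p)$ is a proper subspace of $T_p\R^n$ and more is needed. The plan is to build the nested polynomial surfaces
$$
\Sigma_{\mathbf{i}}(t_1,\dots,t_r) := e^{t_1X_{i_1}}\cdots e^{t_rX_{i_r}}(p),\qquad\mathbf{i}=(i_1,\dots,i_r),\;r\leq N,
$$
and show inductively on $r$ that $P\circ\Sigma_{\mathbf{i}}\equiv 0$. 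The base case $r=1$ is the vanishing of $P$ on integral curves above. For the inductive step, slice $\Sigma_{\mathbf{i}}$ at $t_r=\tau$: for at least $m$ choices of $\tau$ the point $e^{\tau X_{i_r}}(p)$ lies in $\scriptJ'$, and the inductive hypothesis applied at this auxiliary joint gives vanishing of $P$ on the slice $\Sigma_{(i_1,\dots,i_{r-1})}$ based at that joint; the degree bound $\deg_{t_r}(P\circ\Sigma_{\mathbf{i}})\leq DN^r < m$ (tracking how each nested polynomial flow can multiply degrees by $N$) then forces $P\circ\Sigma_{\mathbf{i}}\equiv 0$ by a single-variable vanishing argument. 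Comparing the Taylor expansions at $p$ of $\Sigma_{\mathbf{i}}$ and its permutations, whose leading mixed-derivative differences pick out the iterated Lie brackets $[X_{i_1},[X_{i_2},\dots,X_{i_r}]\cdots](p)$, yields $YP(p)=0$ for every iterated Lie bracket $Y$ of length $\leq N$. The H\"ormander hypothesis of order $N$ then gives $\nabla P(p)=0$, completing the contradiction.

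The main obstacle is the inductive surface construction. When the slice of $\Sigma_{\mathbf{i}}$ is based at an auxiliary joint $q = e^{\tau X_{i_r}}(p)\in\scriptJ'$, the inductive hypothesis at $q$ uses the spanning vector fields witnessing $q$ as a joint, and these need not coincide with $X_{i_1},\dots,X_{i_{r-1}}$ inherited from the original joint $p$; some care is needed to either pigeonhole $\scriptJ'$ onto joints sharing a common set of spanning fields (using the fact that $\scriptL$ is finite, so the collection of possible spanning tuples at joints is finite), or to reformulate the induction more flexibly so that any curve of $\scriptL$ through the auxiliary joint in a prescribed direction may serve as a slice. The degree tracking, where each nested flow may multiply degrees in an outer variable by $N$, also fixes $c_N$ as a decreasing function of $N$.
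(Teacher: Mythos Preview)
Your setup---minimal-degree polynomial $P$ vanishing on $\scriptJ'$, vanishing along every curve of $\scriptL$ meeting $\scriptJ'$, aiming for $\nabla P(p)=0$ at each joint to contradict minimality---matches the paper exactly, and your final paragraph correctly identifies the serious obstacle in the inductive surface construction. That obstacle is genuine and is not resolved by either fix you propose. Pigeonholing onto joints with a common spanning tuple fails twice over: the number of distinct spanning tuples appearing among the joints is bounded only in terms of $\#\scriptL$, not in terms of $n,N$, so the loss is uncontrolled; and even after pigeonholing, the resulting subset $\scriptJ''\subseteq\scriptJ'$ need not satisfy the hypothesis that every curve of $\scriptL$ meeting it does so in $\geq m$ points, which is precisely what your base case needs. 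The ``more flexible'' reformulation is left vague, and I do not see how to make the surface picture work: at an auxiliary joint $q=e^{\tau X_{i_r}}(p)$ the integral curve $t\mapsto e^{tX_{i_{r-1}}}(q)$ is generally not in $\scriptL$, so you cannot invoke the $\geq m$ zeros argument along it.

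The paper sidesteps this by iterating on \emph{polynomials} rather than surfaces. Once $Q$ vanishes on every curve $\ell$ meeting $\scriptJ'$, at a joint $p$ one has $X_\ell Q(p)=0$ for a $\scriptV$-spanning family $\{X_\ell\}$, hence by linearity $XQ(p)=0$ for \emph{every} $X\in\scriptV$. Now $XQ$ is itself a polynomial of degree at most $N\deg Q$ vanishing on all of $\scriptJ'$, so the same curve-vanishing argument applies to $XQ$ in place of $Q$: it vanishes identically on each $\ell$ meeting $\scriptJ'$, whence $X'XQ(p)=0$ for all $X,X'\in\scriptV$ at every joint. Iterating $N$ times (with $c_N<N^{-N}$) gives $X_1\cdots X_N Q(p)=0$ for all $X_i\in\scriptV$; since iterated Lie brackets expand as linear combinations of such products, the H\"ormander condition yields $\nabla Q(p)=0$. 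The point your approach misses is that once $XQ$ vanishes \emph{at} every joint for all $X\in\scriptV$, you may treat $XQ$ as a new polynomial vanishing on $\scriptJ'$ and re-run the argument---there is no need to track flows through auxiliary joints at all.
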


\begin{proof}
    If $\#\scriptJ' < c_N m^n$, there exists a polynomial $Q\not\equiv 0$ of degree $\deg Q < c_N m$ such that $Q \equiv 0$ on $\scriptJ'$.  Without loss of generality, $Q$ has minimal degree among all such polynomials.  
    
    Let $\scriptL'$ denote the set of curves in $\scriptL$ intersecting $\scriptJ'$, and let $\ell=:\{e^{tX_\ell}(x) : t \in \R\} \in \scriptL'$.  Then $t\mapsto Q(e^{tX_\ell}(x))$ is a polynomial of degree at most $N \deg Q$, vanishing at least $m$ times so, provided $c_N \leq N^{-1}$, $Q \equiv 0$ on $\ell$.  Therefore $X_lQ \equiv 0$ on $\ell$, for every $\ell \in \scriptL'$.  

    At a joint $p \in \scriptJ'$, $XQ(p) = 0$ for every $X \in \scriptV$. Each $XQ$ is a polynomial of degree at most $N \deg Q$, vanishing on $\scriptJ'$, so, by the same argument, $X_1X_2 Q(p) = 0$, for every $p \in \scriptJ'$ and $X_1,X_2 \in \scriptV$, provided $c_N < N^{-2}$.  Continuing in this fashion, as long as $c_N < N^{-N}$, $X_1 \cdots X_N Q(p) = 0$, for every $X_i \in \scriptV$ and $p \in \scriptJ'$.  

    Therefore $YQ \equiv 0$ on $\scriptJ'$, for every element of the Lie algebra generated by elements of $\scriptV$.  Since $\scriptV$ obeys the H\"ormander condition of order $N$, $\nabla Q \equiv 0$ on $\scriptJ'$.  Since $\nabla Q$ has lower degree than $Q$ and vanishes on $\scriptJ'$, by hypothesis, $\nabla Q \equiv 0$.  Thus $Q$ is a constant.  As $\scriptJ' \neq \emptyset$ and $Q$ vanishes on $\scriptJ'$, $Q \equiv 0$, a contradiction.  
\end{proof}

Now we may conclude the proof of Theorem~\ref{T:joints}.  

\begin{proof}
Let $m:=\tfrac12\#\scriptJ/\#\scriptL$, half the average number of joints per curve.  We wish to show that $\#\scriptJ \gtrsim m^n$, and so we proceed by contradiction, assuming initially that $\#\scriptJ < c_N m^n$, where $c_N$ is a sufficiently small constant.  

By Lemma~\ref{L:bad line},  there exists a curve $\ell \in \scriptL$ such that $\ell \cap \scriptJ < m$.  Let $
\scriptL_1 := \scriptL \setminus \{\ell\}$, let $\scriptJ_1$ denote the set of joints formed from $\scriptL_1$, and repeat.  Namely, given $i$ and $\emptyset \neq \scriptJ_i \subseteq \scriptJ$, which are the joints from $\scriptL_i \subseteq \scriptL$, we have  $\#\scriptJ_i \leq \#\scriptJ < c_N m^n$, so there exists $\ell \in \scriptL_i$ such that $\#\ell \cap \scriptJ_i < m$.  We remove $\ell$ from $\scriptL_i$ to form $\scriptL_{i+1}$, and denote by $\scriptJ_{i+1}$ the corresponding set of joints.  The process stops when $\scriptJ_i = \emptyset$.  By construction, $\#\scriptJ_i \geq \#\scriptJ-mi$, so at least $\#\scriptJ/m = 2\#\scriptL$ iterations are needed before we run out of joints.  However, we remove one curve at each step, so we run out of curves before joints, which is impossible (since curves are needed to form joints).  
\end{proof}

\section{Incidences in a variety} \label{S:I in Variety}

In this section, we prove a key proposition that will enable us to adapt the polynomial partitioning proof of Szem\'eredi--Trotter to prove Theorem \ref{T:multijoint}.  Under the hypotheses of  Theorem \ref{T:multijoint}, 
$X_1 \pi_1 \equiv 0$, 
so
\begin{equation} \label{E:P1}
\scriptP_1:=\{\pi_1(\ell_1) : \ell_1 \in \scriptL_1\}
\end{equation}
is a set of points in $\R^{n-1}$.  

\begin{proposition} \label{P:bilinear poly}
Suppose there exists $0 \not\equiv q$,  a polynomial of degree at most $D$ on $\R^{n-1}$, vanishing identically on $\scriptP_1$.  Then 
\begin{equation} \label{E:bilinear poly}
\#\scriptI(\scriptL_1,\scriptL_2) \lesssim \#\scriptL_1 + D\#\scriptL_2.
\end{equation}
\end{proposition}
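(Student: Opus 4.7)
The plan follows the polynomial partitioning template for Szemer\'edi--Trotter, adapted to the present geometric setting. Parametrize each $\ell_2\in\scriptL_2$ as $t\mapsto e^{tX_2}(x_0)$ for some $x_0\in\ell_2$ and form the univariate polynomial
\[
g_{\ell_2}(t):=q\bigl(\pi_1(e^{tX_2}(x_0))\bigr).
\]
Because the flow $(t,x)\mapsto e^{tX_2}(x)$ has degree $\leq N$ in $(t,x)$, and $\pi_1$, $q$ are polynomial (with $\deg q\leq D$), $g_{\ell_2}$ is a polynomial in $t$ of degree $\lesssim D$, where the implicit constant depends only on $N$ and $\deg\pi_1$. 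The pivotal observation is that every incidence $p\in\ell_1\cap\ell_2$ satisfies $\pi_1(p)=\pi_1(\ell_1)\in\scriptP_1\subseteq V(q)$, and hence the parameter value of $p$ on $\ell_2$ is a zero of $g_{\ell_2}$.

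Split $\scriptL_2=\scriptL_2^{g}\sqcup\scriptL_2^{b}$ according to whether $g_{\ell_2}\not\equiv 0$ (\emph{good}) or $g_{\ell_2}\equiv 0$ (\emph{bad}, equivalently $\pi_1(\ell_2)\subseteq V(q)$). For each good $\ell_2$, the polynomial $g_{\ell_2}$ has $\lesssim D$ zeros, so $\ell_2$ carries $\lesssim D$ incidence points; this yields the full $D\#\scriptL_2$ contribution to the bound and handles $\scriptL_2^{g}$ completely.

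The remaining and main work is to show that the bad curves collectively contribute at most $\lesssim\#\scriptL_1$. Here I would view the bad part as a point-curve incidence problem in $\R^{n-1}$: project by $\pi_1$ so that $\scriptP_1$ becomes a set of points lying in $V(q)$ and the bad curves become a family of parametrized polynomial curves $\pi_1(\ell_2)\subseteq V(q)$ of degree $\lesssim N\deg\pi_1$. Fiberwise Bezout, applied to the polynomial system $\pi_1(e^{tX_2}(x_0))=y$, ensures that each $y\in\scriptP_1$ has $\lesssim 1$ preimages on each $\ell_2$, so the bad incidence count equals $\sum_{\ell_2\text{ bad}}|\pi_1(\ell_2)\cap\scriptP_1|$ up to constants. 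I would then combine this with Bezout between distinct bad $\pi_1(\ell_2)$'s, which all lie in the common variety $V(q)$, to run a Cauchy--Schwarz/combinatorial argument of the standard point-curves-in-a-variety type.

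The hard step is precisely this bad case: naive fiberwise bounds yield only $D\#\scriptL_1$ or $\#\scriptL_1\#\scriptL_2^{b}$ for the bad incidences, which is too weak. To extract the advertised $\#\scriptL_1$ I expect to need the polynomial-flow rigidity and the H\"ormander/nilpotency hypothesis of Theorem~\ref{T:multijoint}: these should force distinct $X_2$-orbits in $\scriptL_2^{b}$ to produce sufficiently distinct $\pi_1$-images inside $V(q)$ (so that $\#\scriptL_2^{b}$ is controlled in terms of $D$ and the ambient geometry), and should also limit the number of ``exceptional'' $\ell_1$'s lying entirely within the variety $B:=\{x\in\R^n:q(\pi_1(e^{tX_2}(x)))\equiv 0\text{ in }t\}\subseteq\R^n$, whose defining coefficients in $x$ have degree $\lesssim D$. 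Balancing these estimates is where I expect the most careful bookkeeping to be required.
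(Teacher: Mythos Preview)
Your treatment of the ``good'' $\ell_2$ (those with $g_{\ell_2}\not\equiv 0$) is correct and matches one ingredient in the paper's argument: if $\ell_2$ carries more than $CD$ incidences then $Q:=q\circ\pi_1$ must vanish identically on $\ell_2$. But your plan for the ``bad'' $\ell_2$ has a genuine gap. You yourself note that the naive bounds give $D\#\scriptL_1$ or $\#\scriptL_1\cdot\#\scriptL_2^b$, and the fixes you sketch do not close it. There is no a priori reason why only $O_D(1)$ integral curves of $X_2$ can project into $V(q)$; and even granting such a bound, combining it with the trivial per-curve estimate still gives a $D$-dependent multiple of $\#\scriptL_1$. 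Likewise, the set $B$ you introduce is cut out by equations of degree $\lesssim D$, so an $\ell_1\not\subseteq B$ may still meet $B$ in $\sim D$ points---again too many. A Cauchy--Schwarz argument on curves trapped in $V(q)$ does not obviously help either, since distinct $\pi_1(\ell_2)$ can share many points of $\scriptP_1$ without contradiction.

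The paper's approach is structurally different and supplies exactly the missing idea. Rather than splitting $\scriptL_2$, one assumes \eqref{E:bilinear poly} fails, replaces $q$ by a polynomial of \emph{minimal} degree vanishing on $\scriptP_1$, and shows that some curve in $\scriptL_1\cup\scriptL_2$ must be low-incidence (at most $C$ incidences for an $\ell_1$, at most $CD$ for an $\ell_2$); iteratively removing such curves then yields a contradiction. The substance is in proving a low-incidence curve exists. Assuming every $\ell_2$ has $>CD$ incidences (so $Q\equiv 0$ on every $\ell_2$) \emph{and} every $\ell_1$ has $>C$ incidences, one bootstraps between the two families. The key technical input is that nilpotency of $\mathfrak g$ forces $[X_1^{j_1}X_2^{k_1}\cdots X_2^{k_m}Q]\circ\ell_1$ to have degree $\lesssim k_1+\cdots+k_m$, \emph{independent of $D$}. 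Hence the $>C$ incidences on each $\ell_1$ annihilate $X_2^kQ$ along $\scriptL_1$ for bounded $k$; then the $>CD$ incidences on each $\ell_2$ annihilate $X_1^jX_2^kQ$ along $\scriptL_2$; and iterating through words of length $\leq N$ together with the H\"ormander condition gives $\nabla q\equiv 0$ on $\scriptP_1$, contradicting the minimality of $\deg q$. Your proposal never invokes the minimality of $q$, and without it---and without the nilpotency-driven degree bound that makes the $\ell_1$ side work with only $O(1)$ incidences---I do not see how to close the trapped-curve case.
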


\begin{proof}
We may, of course, assume that $\scriptL_1$ and $\scriptL_2$ are both nonempty; otherwise the incidence set is empty, in which case the estimate is trivial.  We may further assume that $q$ has minimal degree among all nonzero polynomials vanishing identically on $\scriptP_1$.  

We now suppose by way of contradiction that 
\begin{equation} \label{E:bilin poly false}
\#\scriptI > C \#\scriptL_1 + C D \#\scriptL_2,
\end{equation}
with $\scriptI:=\scriptI(\scriptL_1,\scriptL_2)$, $C$ a large constant, depending on $N,n$, to be determined.  

We say that a curve $\ell_1 \in \scriptL_1$ is \textit{bad} if $\ell_1$ has \textit{at most} $C$ incidences with $\scriptL_2$, and that a curve $\ell_2 \in \scriptL_2$ is \textit{bad} if it has \textit{at most} $CD$ incidences with $\scriptL_1$.  As we prove below, \eqref{E:bilin poly false} implies that there exists at least one bad curve.

\begin{claim}\label{L:bad line}
    Under hypothesis \eqref{E:bilin poly false}, there exists a bad curve.  
\end{claim}

\begin{proof}[Proof of Claim~\ref{L:bad line}]
We suppose, by way of contradiction, that every $\ell_1 \in \scriptL_1$ has more than $C$ incidences with curves in $\scriptL_2$ and every $\ell_2 \in \scriptL_2$ has more than $CD$ incidences with curves in $\scriptL_1$.  Since we may take $C > 1$ (it will be much larger, in fact) this assumption immediately implies that $d\pi_1$ has full rank on all curves in $\scriptL_1$.  Indeed, by construction, $X_1=0$ whenever $d\pi_1$ has rank smaller than $n-1$, an $\ell_1$ on which $X_1$ vanishes is a point, and a point can only lie on a single integral curve of $X_2$, hence at most one curve in $\scriptL_2$.  

We set $Q:=q \circ \pi_1$.  Therefore, $\deg Q \lesssim D$.  Since $Q$ vanishes identically on each curve of $\scriptL_1$, $Q \circ \ell_2$ has more than $CD$ zeros for every $\ell_2 \in \scriptL_2$.  Since $\deg(Q \circ \ell_2) \lesssim D$, for $C$ sufficiently large, $Q \equiv 0$ on every curve in $\scriptL_2$, and, since $(Q \circ \ell_2)' = (X_2Q) \circ \ell_2 \equiv 0$, we see that $(X_2^k Q) \circ \ell_2 \equiv 0$ for every $\ell_2 \in \scriptL_2$.  

Next, we would like to repeat the preceding argument on the curves in $\scriptL_1$, with $X_2^kQ$ in place of $Q$. 
However, since curves in $\scriptL_1$ might only have $C \ll D$ incidences with curves in $\scriptL_2$, we need stronger control on the degree of $X_2^k Q \circ \ell_1$.  
Specifically, we will use the following lemma, whose proof we postpone to  \S \ref{lemmaSubsection}.

\begin{lemma}\label{L:X1X2Qell1}
Let $m \geq 0$ and $j_1,k_1,\ldots,j_m,k_m \geq 0$ be integers, and let $\ell_1 \in \scriptL_1$.  Then the composition
\begin{equation}\label{E:X1X2Qell1}
    [X_1^{j_1}X_2^{k_1} \cdots X_1^{j_m}X_2^{k_m}Q] \circ \ell_1
\end{equation}
is a polynomial of degree $\lesssim K:=k_1+\cdots+k_m$.  
\end{lemma}

Now we complete the proof of Claim~\ref{L:bad line}, using Lemma~\ref{L:X1X2Qell1} (proved in \S\ref{lemmaSubsection}).  We have seen that for all $k \geq 0$, $X_2^kQ$ vanishes identically on every curve in $\scriptL_2$.  Thus, $(X_2^kQ) \circ \ell_1$ has more than $C$ zeros for every $\ell_1 \in \scriptL_1$.  By Lemma~\ref{L:X1X2Qell1}, for $k \lesssim N$ and $C$ sufficiently large, $(X_2^kQ)\circ \ell_1$, along with all of its derivatives, is identically zero.  Therefore, $X_1^j X_2^kQ$ has at least $CD$ zeros, and hence vanishes identically, on each curve in $\scriptL_2$ for all $j \geq 0$ and all $0 \leq k \lesssim N$.  Iterating the preceding argument, 
$$
X_1^{j_1}X_2^{k_1} \cdots X_1^{j_m}X_2^{k_m}Q
$$
vanishes identically on each curve of $\scriptL_1\cup \scriptL_2$ for all $k_1 + \cdots + k_m \lesssim N$.  

Expanding Lie brackets as linear combinations of iterated applications of vector fields, $XQ \equiv 0$ on $\scriptL_1$ for every $X \in \mathfrak g$.  By our hypothesized H\"ormander condition, $\nabla Q \equiv 0$ on each curve of $\scriptL_1$.  Recalling that $Q=q \circ \pi_1$ and that $d\pi_1$ has full rank on $\scriptL_1$, $\nabla q \equiv 0$ on $\pi_1(\ell_1)$ for every $\ell_1 \in \scriptL_1$.  However, we have hypothesized that $q$ has minimal degree among all nontrivial polynomials vanishing on $\scriptP_1$, so $\nabla q \equiv 0$, whence $q$ is a nonzero constant, a contradiction since $\scriptL_1 \neq \emptyset$.  This completes the proof of Claim~\ref{L:bad line}.  
\end{proof}

Having proved Claim \ref{L:bad line}, we now introduce a stopping-time procedure that involves iteratively removing bad curves until there are none left.  Initiating the first step with the collections $\scriptL_1,\scriptL_2$, each step of our procedure begins with collections $\scriptL_1' \subseteq \scriptL_1$ and $\scriptL_2' \subseteq \scriptL_2$, obeying
$$
\#\scriptI(\scriptL_1',\scriptL_2') > C\#\scriptL_1' + CD\#\scriptL_2'.  
$$
By Claim~\ref{L:bad line}, there exists a bad curve $\ell \in \scriptL_1' \cup \scriptL_2'$, and we set $\scriptL_j'':=\scriptL_j' \setminus \{\ell\}$.  (By the H\"ormander condition, $\scriptL_1 \cap \scriptL_2 = \emptyset$, for otherwise, $X_1,X_2$ and all of their Lie brackets are tangent to a proper submanifold of $\R^n$.)   We stop the procedure if 
\begin{equation} \label{E:stop}
\#\scriptI(\scriptL_1'',\scriptL_2'') \leq C\#\scriptL_1'' + CD\#\scriptL_2'';
\end{equation}
otherwise, we repeat the procedure with these new collections of curves.  In particular, if we run out of curves in either collection, the procedure must stop, because then the left-hand side of \eqref{E:stop} equals zero.  

For $j=1,2$, let $\scriptL_j''$ be the set of  curves remaining in $\scriptL_j$, and let $d_j$ denote the number of curves removed from $\scriptL_j$.  That is, $d_j:=\#\scriptL_j-\#\scriptL_j''$, at the stopping time.  Thus, $d_1 \leq \#\scriptL_1$ and $d_2 \leq \#\scriptL_2$, and, by parity considerations, at least one of these inequalities must be strict.  

By the definition of bad curves, removal of a bad $\ell_1$ results in a loss of at most $C$ incidences and removal of a bad $\ell_2$ results in a loss of at most $CD$ incidences, so
$$
\#\scriptI(\scriptL_1'',\scriptL_2'') \geq \#\scriptI(\scriptL_1,\scriptL_2) - Cd_1-CDd_2.  
$$
On the other hand, we have assumed \eqref{E:stop} and \eqref{E:bilin poly false}, so
\begin{align*}
C(\#\scriptL_1 - d_1) &+ CD(\#\scriptL_2-d_2)  = C \#\scriptL_1'' + CD\#\scriptL_2''
\geq \#\scriptI(\scriptL_1'',\scriptL_2'')\\
&> C(\#\scriptL_1-d_1) + CD(\#\scriptL_2-d_2) ,
\end{align*}
which is impossible.  This completes the proof of Proposition~\ref{P:bilinear poly}.
\end{proof}

\subsection{A degree bound for derivatives of $Q$ on curves $\ell_1$} 
\label{lemmaSubsection}

Now we give the postponed proof of Lemma~\ref{L:X1X2Qell1}, stated in the proof of Proposition~\ref{P:bilinear poly}.

\begin{proof}[Proof of Lemma~\ref{L:X1X2Qell1}]
It suffices to show that for all $j_1$ sufficiently large, we have
\begin{equation}
\label{identicallyZero}
X_1^{j_1} X_2^{k_1} \dots X_1^{j_m} X_2^{k_m} Q \equiv 0.
\end{equation}
Specifically, we claim that if $j_1 > NK$, then (\ref{identicallyZero}) holds.
Proving this claim is sufficient to conclude that 
the composition 
\[
(X_1^{j_1} X_2^{k_1} \dots X_1^{j_m} X_2^{k_m} Q) \circ \ell_1
\]
is a polynomial of degree $\lesssim K$.

Since $X_1X_2 = [X_1, X_2] + X_2 X_1$ and $X_1 Q \equiv 0$, we can write
$X_1^{j_1} X_2^{k_1} \dots X_1^{j_m} X_2^{k_m}Q$ as a sum
\begin{equation}
\label{sumRep}
\sum_{w = (w_1, \dots, w_K)} c_w X_{w_1} \dots X_{w_K} Q,
\end{equation}
where for each $i = 1, \dots K$, the vector field
$X_{w_i}$ is an iterated Lie bracket of the form
\[
X_{w_i} = [X_1, [X_1, \dots [X_1, X_2]]],
\]
containing $d_{w_i}$-many copies of $X_1$, and  
the sum is over $K$-tuples $(w_1, \dots, w_K)$ so that $d_{w_1} + \dots + d_{w_K} = j_1 + \dots + j_m$.
By the pigeonhole principle, for each $K$-tuple $(w_1,\ldots,w_K)$, there exists $i$ such that  
\begin{equation}
\label{degreeBig}
d_{w_i} \geq \frac{j_1 + \dots + j_m}{K} \geq \frac{j_1}{K}.
\end{equation}
If $j_1 > NK$, then, for such an $i$, we have $d_{w_i} > N$, which means that
\[
X_{w_{i}} \equiv 0,
\]
by the nilpotency condition.
Since we can find an $i$ satisfying (\ref{degreeBig}) for every term in the sum (\ref{sumRep}), it follows that if $j_1 > NK$, then
\[
X_1^{j_1} X_2^{k_1} \dots X_1^{j_m} X_2^{k_m} Q \equiv 0,
\]
as claimed.
\end{proof}

\section{Bounds for restricted multijoints} \label{S:Multijoints}

In this section, we prove Theorem~\ref{T:multijoint}. To do so, we will adapt the polynomial partitioning proof of the Szemer\'edi--Trotter theorem from \cite{taowebsite}.  

We first note an immediate consequence of Proposition~\ref{P:bilinear poly}.  

\begin{proposition}\label{P:simple est}
Under the hypotheses of Theorem~\ref{T:multijoint},
\begin{equation} \label{E:simple est}
\#\scriptI(\scriptL_1,\scriptL_2) \lesssim \min\{\#\scriptL_1 +\#\scriptL_1^{\frac1{n-1}}\#\scriptL_2, \#\scriptL_2 +\#\scriptL_1\#\scriptL_2^{\frac1{n-1}}\}.  
\end{equation}    
\end{proposition}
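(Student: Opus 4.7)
The plan is to apply Proposition~\ref{P:bilinear poly} twice, once directly and once with the roles of $\scriptL_1$ and $\scriptL_2$ interchanged, after producing in each case a suitable low-degree polynomial vanishing on the relevant point set via a dimension count.

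First I would note that since $X_1 \pi_1 \equiv 0$, the map $\pi_1$ is constant on every integral curve $\ell_1 \in \scriptL_1$, so the set $\scriptP_1$ defined in \eqref{E:P1} has cardinality at most $\#\scriptL_1$. The space of polynomials of degree at most $D$ on $\R^{n-1}$ has dimension $\binom{D+n-1}{n-1} \sim D^{n-1}$, so by elementary linear algebra, for any finite set $S \subseteq \R^{n-1}$ there exists a nonzero polynomial of degree $D \lesssim (\#S)^{1/(n-1)}$ vanishing identically on $S$. Applying this with $S = \scriptP_1$ yields a nonzero polynomial $q$ of degree $D \lesssim \#\scriptL_1^{1/(n-1)}$ with $q \equiv 0$ on $\scriptP_1$. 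Plugging this into Proposition~\ref{P:bilinear poly} gives
$$
\#\scriptI(\scriptL_1,\scriptL_2) \lesssim \#\scriptL_1 + D\#\scriptL_2 \lesssim \#\scriptL_1 + \#\scriptL_1^{\frac{1}{n-1}} \#\scriptL_2.
$$

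Next I would observe that the hypotheses of Theorem~\ref{T:multijoint} are entirely symmetric in the indices $1$ and $2$: the Lie algebra $\mathfrak g$ generated by $X_1$ and $X_2$ is symmetric in the two vector fields, and the role of $\pi_j$ in Proposition~\ref{P:bilinear poly} is only as a polynomial map annihilated by $X_j$. Thus, repeating the argument above with the roles of $\scriptL_1$ and $\scriptL_2$ exchanged (and using $\scriptP_2 := \{\pi_2(\ell_2) : \ell_2 \in \scriptL_2\}$) yields
$$
\#\scriptI(\scriptL_1,\scriptL_2) \lesssim \#\scriptL_2 + \#\scriptL_1 \#\scriptL_2^{\frac{1}{n-1}}.
$$
Taking the minimum of the two bounds gives \eqref{E:simple est}.

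I do not anticipate any real obstacle here, which matches the authors' description of this as an ``immediate consequence'' of Proposition~\ref{P:bilinear poly}: the only non-automatic ingredients are the standard polynomial interpolation dimension count and the observation that the hypotheses of Theorem~\ref{T:multijoint} are symmetric in the two indices, so that Proposition~\ref{P:bilinear poly} applies in either direction.
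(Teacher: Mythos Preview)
Your proposal is correct and matches the paper's proof essentially line for line: the authors also invoke the standard parameter-counting bound to find $q \not\equiv 0$ of degree $\lesssim (\#\scriptP_1)^{1/(n-1)}$ vanishing on $\scriptP_1$, apply Proposition~\ref{P:bilinear poly}, and then appeal to the symmetry of the hypotheses for the second half of the minimum.
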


\begin{proof}
%
With $\scriptP_1$ as in \eqref{E:P1}, we recall that $\scriptP_1$ is a collection of points in $\R^{n-1}$, and $\#\scriptP_1 \leq \#\scriptL_1$.  By standard parameter counting arguments, there exists a polynomial $q \not \equiv 0$ of degree $\deg q \lesssim (\#\scriptP_1)^{\frac1{n-1}}$ that vanishes on $\scriptP_1$.  Thus by Proposition~\ref{P:bilinear poly},
$$
\#\scriptI(\scriptL_1,\scriptL_2) \lesssim \#\scriptL_1 + \#\scriptL_1^{\frac1{n-1}}\#\scriptL_2.  
$$
Since our hypotheses are symmetric, we obtain \eqref{E:simple est}.
\end{proof}

We are finally ready to complete the proof of Theorem~\ref{T:multijoint}.  

\begin{proof}[Proof of Theorem~\ref{T:multijoint}]
We first consider a degenerate case.  Let $\scriptL_j^0$ denote the set of all one-point ``curves'' of $\scriptL_j$; these are points at which $X_j$ vanishes.  We note that there are no incidences between $\scriptL^0_1$ and $\scriptL^0_2$, because at a point where $X_1$ and $X_2$ both vanish, all Lie brackets of $X_1,X_2$ are easily seen (by induction) to be zero.  

Since an element of $\scriptL_1^0$, being a singleton, can only intersect a single integral curve of $\scriptL_2$, and likewise with indices reversed, we have that 
$$
\#\scriptI(\scriptL^0_1,\scriptL_2) \leq \#\scriptL_1^0,
$$
and
$$
\#\scriptI(\scriptL_1,\scriptL_2^0) \leq \#\scriptL_2^0.
$$
Finally, in the nontrivial case $\#\scriptL_1 \geq 1$ (otherwise, there are no incidences), $\#\scriptL_2^0 \lesssim \#\scriptL_1^{\frac1{n-1}}\#\scriptL_2$.

For the remainder of the argument, we may therefore assume that $\scriptL_1$ and $\scriptL_2$ contain only nondegenerate curves (i.e., $\scriptL^0_1 = \scriptL^0_2 = \emptyset$).  In particular, $d\pi_j$  has full rank along all elements of $\scriptL_j$.    

Now let $D$ be a large constant, to be determined. By Theorem 4.1 of \cite{guth2015erdHos}, there exists a polynomial $q$ on $\R^{n-1}$ such that 
$$
\R^{n-1} \setminus Z^q
 = \bigcup_{\alpha \in \scriptA}\scriptO^\alpha,
$$
where $Z^q:=\{q=0\}$, the $\scriptO^\alpha$ are disjoint open sets, $\#\scriptA \lesssim D^{n-1}$, and 
$$
\#\scriptO^\alpha \cap \scriptP_1 \lesssim D^{-(n-1)}\#\scriptP_1, \qquad \alpha \in \scriptA.
$$

With $Q:=q \circ\pi_1$, we decompose
$$
\scriptL_j = \scriptL_j^Q \cup \bigcup_{\alpha \in \scriptA} \scriptL_j^\alpha,
$$
where $\ell \in \scriptL_j^Q$ when $\ell$ intersects $Z^Q = \pi_1^{-1}(Z^q)$, and $\ell \in \scriptL_j^\alpha$ if $\ell$ intersects $\tilde\scriptO^\alpha:=\pi_1^{-1}(\scriptO^\alpha)$.  In particular, curves in $\scriptL_1^Q$ are contained in $Z^Q$ and curves in $\scriptL_1^\alpha$ are contained in $\tilde\scriptO^\alpha$.  We may thus expand
\begin{equation} \label{E:incidence decomp}
\scriptI(\scriptL_1,\scriptL_2)
=
\scriptI(\scriptL_1^Q,\scriptL_2^Q) \cup \bigcup_{\alpha \in\scriptA} \scriptI(\scriptL_1^\alpha,\scriptL_2^\alpha).
\end{equation}

We begin by bounding the ``cell contribution.'' By Proposition~\ref{P:simple est},
\begin{equation} \label{E:one cell}
\#\scriptI(\scriptL_1^\alpha,\scriptL_2^\alpha) \lesssim \#\scriptL_2^\alpha + (\#\scriptL_2^\alpha)^{\frac1{n-1}}\#\scriptL_1^\alpha.
\end{equation}

Let $\ell_2 \in \scriptL_2$.  Then $\deg Q \circ \ell_2 \lesssim D$, so $\ell_2$ can only intersect $Z^Q$ at a bounded number of points, unless $\ell_2 \subset Z^Q$.  Therefore, $\#\{\alpha : \ell_2 \in \scriptL_2^\alpha\} \lesssim D$.  This gives us the estimate
\begin{equation} \label{E:L2alpha}
\sum_{\alpha \in\scriptA} \#\scriptL_2^\alpha \lesssim D\#\scriptL_2.
\end{equation}

We turn to the second term on the right of \eqref{E:one cell}.  Since $d\pi_1$ has full rank on the curves in $\scriptL_1$, by B\'ezout's theorem, each point in $\scriptP_1$ has a bounded number of pre-images in $\scriptL_1$, whence
$$
\#\scriptL_1^\alpha \lesssim \#\scriptP_1 \cap \scriptO^\alpha \lesssim D^{-(n-1)}\#\scriptP_1 \leq  D^{-(n-1)}\#\scriptL_1, \qquad \alpha \in \scriptA.
$$
Now we can use H\"older's inequality to sum the second term in \eqref{E:one cell}:
\begin{equation} \label{E:L1alpha stuff}
\sum_{\alpha \in \scriptA} (\#\scriptL_2^\alpha)^{\frac1{n-1}}\#\scriptL_1^\alpha \lesssim D^{-(n-1)}\#\scriptL_1\#\scriptA^{\frac{n-2}{n-1}}\bigl(\sum_{\alpha \in \scriptA} \#\scriptL_2^\alpha\bigr)^{\frac1{n-1}}.
\end{equation}
Combining \eqref{E:L2alpha} and \eqref{E:L1alpha stuff} gives
$$
\sum_{\alpha \in \scriptA} \scriptI(\scriptL_1^\alpha,\scriptL_2^\alpha) \lesssim D\#\scriptL_2 + D^{-\frac{n-2}{n-1}}\#\scriptL_1\#\scriptL_2^{\frac1{n-1}}.
$$
We optimize by setting $$
D\#\scriptL_2 = D^{-\frac{n-2}{n-1}}\#\scriptL_1\#\scriptL_2^{\frac{1}{n-1}},
$$
which suggests $D=\#\scriptL_1^{\frac{n-1}{2n-3}}\#\scriptL_2^{-\frac{n-2}{2n-3}}$. Of course, we also want $D \geq 1$, and with $D:=\max\{\#\scriptL_1^{\frac{n-1}{2n-3}}\#\scriptL_2^{-\frac{n-2}{2n-3}},1\}$, we have
\begin{equation} \label{E:cells ok}
\sum_{\alpha\in \scriptA}\#\scriptI(\scriptL_1^\alpha,\scriptL_2^\alpha)\lesssim\#\scriptL_2+\#\scriptL_1^\frac{n-1}{2n-3}\#\scriptL_2^\frac{n-1}{2n-3}.
\end{equation}

Now it remains to bound $\#\scriptI(\scriptL_1^Q,\scriptL_2^Q)$. Since $\pi_1(\scriptL_1^Q) \subseteq Z^q$, by Proposition \ref{P:bilinear poly}, 
$$
\#\scriptI(\scriptL_1^Q,\scriptL_2^Q)\lesssim\#\scriptL_1+D\#\scriptL_2\leq \#\scriptL_1 + \#\scriptL_2+\scriptL_1^\frac{n-1}{2n-3}\#\scriptL_2^\frac{n-1}{2n-3},
$$
and thus we conclude the proof of Theorem \ref{T:multijoint}.
\end{proof}

\section{Joints in Manifolds} \label{S:manifolds}

We now wish to consider variants of the problems considered in Theorems~\ref{T:joints} and~\ref{T:multijoint} on manifolds, and hence, necessarily, without the hypothesis that the flow maps are polynomial.  Since flows can intersect infinitely many times in this setting (indeed, consider the helical flow $t \mapsto (\cos t,\sin t,t)$ and the vertical flow $t \mapsto (1,0,t)$ on the cylinder $\mathbb S^1 \times \R$), we need an additional hypothesis.

\begin{theorem}
\label{T:joints in M}
Let $M$ be a smooth, $n$-dimensional manifold, each of whose components is simply connected, and let $\scriptV\leq \scriptX(M)$ be a finite dimensional vector space of smooth vector fields, whose elements generate a nilpotent Lie algebra $\frak g$ of dimension $N$, spanning the tangent space $T_xM$ at each point $x \in M$.  If $\scriptL$ is a finite collection of flows of elements of $\scriptV$, and $\scriptJ$ is the set of joints of $\scriptL$, then 
$$
\#\scriptJ \lesssim \#\scriptL^{n/n-1},
$$  
where the implicit constant depends only on $N$.  
\end{theorem}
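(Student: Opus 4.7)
The plan is to reduce Theorem~\ref{T:joints in M} to Theorem~\ref{T:joints} by transporting the configuration to global coordinates on each component of $M$ in which the flows of elements of $\scriptV$ become polynomial. Since a joint is an intersection point of curves, we may handle one component at a time; henceforth assume $M$ is connected, and therefore simply connected.

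To build these coordinates, I would first identify $M$ as a homogeneous space of a nilpotent Lie group. Let $G$ denote the simply connected Lie group with Lie algebra $\frakg$; since $\frakg$ is nilpotent of dimension $N$, the exponential map $\frakg \to G$ is a diffeomorphism, and the Baker--Campbell--Hausdorff series terminates, so group multiplication is polynomial of degree $\lesssim N$ in exponential coordinates. Assuming the vector fields in $\scriptV$ are complete (so that the infinitesimal $\frakg$-action integrates to a smooth $G$-action on $M$), the spanning hypothesis forces the orbit maps to be open submersions, and transitivity then follows from connectedness of $M$. Fixing a basepoint $p_0\in M$ and letting $H$ be its stabilizer, I would identify $M\cong G/H$.

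Next, I would produce a global polynomial coordinate system on $M$. Choose a Malcev basis $X_1,\ldots,X_N$ of $\frakg$ adapted to both the descending central series of $\frakg$ and to $\mathrm{Lie}(H)$, with $X_1,\ldots,X_n$ complementing $\mathrm{Lie}(H)$, and set
\[
\psi(t_1,\ldots,t_n) := e^{t_1X_1}\cdots e^{t_nX_n}\cdot p_0.
\]
By the structure theory of canonical coordinates of the second kind on simply connected nilpotent homogeneous spaces, $\psi$ is a global diffeomorphism $\R^n\to M$. Moreover, for any $X\in\frakg$, one rewrites $e^{sX}\cdot e^{t_1X_1}\cdots e^{t_nX_n}$ in canonical coordinates of the second kind via iterated BCH (terminating by nilpotency) and absorbs the $H$-factor using $H\cdot p_0=p_0$, showing that the pullback $\psi^{-1}\circ e^{sX}\circ\psi$ is polynomial in $(s,t_1,\ldots,t_n)$ of degree bounded in terms of $N$.

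With these coordinates in place, $\psi^*\scriptV$ is a finite-dimensional linear subspace of vector fields on $\R^n$ whose elements have polynomial flows of degree bounded in $N$ and whose Lie algebra is (isomorphic to) $\frakg$, so the H\"ormander condition of order $N$ still holds. Theorem~\ref{T:joints} applied to $\psi^*\scriptV$ and $\psi^{-1}(\scriptL)$ then yields $\#\scriptJ \lesssim \#\scriptL^{n/(n-1)}$ with implicit constant depending only on $N$. The main obstacle will be the technical verification that $\psi$ is a \emph{global} diffeomorphism with \emph{globally} polynomial pullback flows (rather than merely locally), which will require carefully combining the standard nilpotent Lie theory on $G$ with the quotient structure $M=G/H$ and the simple connectedness of $M$ (which guarantees, among other things, that the principal $H$-bundle $G\to G/H$ admits a smooth global section).
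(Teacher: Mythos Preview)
Your reduction strategy is the right idea, and your coordinate construction via a Malcev basis through the stabilizer subalgebra is essentially the paper's Lemma~\ref{L:Lie algebra}. However, there is a genuine gap: you \emph{assume} that the vector fields in $\scriptV$ are complete so that the infinitesimal $\frakg$-action integrates to a global $G$-action on $M$. Nothing in the hypotheses guarantees this. For a concrete obstruction, take $M$ to be any simply connected proper open subset of $\R^n$ with the constant-coefficient vector fields; flows exit $M$ in finite time, there is no transitive $G$-action, and $M$ is not a homogeneous space at all. In that situation your map $\psi$ simply does not exist as a global diffeomorphism, and you cannot apply Theorem~\ref{T:joints} directly.

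The paper confronts exactly this issue. It builds the same object you do on the group side (the quotient $G/Z$ with $Z=\exp\frak z$, $\frak z=\{X\in\frakg:X(x_0)=0\}$), but in the other direction: using Chow's theorem and simple connectedness it defines $\Phi:M\to\R^n$ by following whatever finite concatenations of flows do exist, obtaining only a \emph{local} diffeomorphism that may fail to be injective. Because $\Phi$ need not be one-to-one, the paper cannot invoke Theorem~\ref{T:joints} as a black box either; instead it reproves Lemma~\ref{L:Quilodran} through $\Phi$, using the crucial observation that although $\Phi$ may collapse distinct points of $M$, its restriction to any single nondegenerate flow curve $\ell$ is a nonconstant polynomial curve in $\R^n$ and hence injective. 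That is enough to transfer the ``$\geq m$ incidences per curve'' hypothesis to $\R^n$ and run the vanishing-polynomial argument there. The bad-curve removal step is then carried out on $M$ itself. If you add completeness as a hypothesis your argument goes through and coincides with the paper's construction (your $\psi$ is then $\Phi^{-1}$), but as written you are proving a weaker theorem.
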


\begin{theorem}
\label{T:multijoints in M}
Let $M$ be a smooth, $n$-dimensional manifold, each of whose components is simply connected.  Assume that $X_1,X_2 \in \scriptX(M)$ are smooth vector fields, generating a nilpotent Lie algebra $\mathfrak g$ of dimension $N$, whose elements span $T_xM$ for each $x$. Assume further that for all $x \in\scriptM$, $X_j(x) \not\in [\frak g,\frak g]_{x}$, $j=1,2$.  If $\scriptL_j$ is a finite collection of integral curves of $X_j$, $j=1,2$, and $\scriptI$ denotes the set of their incidences, then
\begin{equation} \label{E:multijoints in M}
\#\scriptI \lesssim \#\scriptL_1 + \#\scriptL_2 + (\#\scriptL_1\#\scriptL_2)^{\frac{n-1}{2n-3}},
\end{equation}
where the implicit constant depends only on $N$.
\end{theorem}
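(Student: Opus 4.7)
My plan is to reduce Theorem~\ref{T:multijoints in M} to the Euclidean version, Theorem~\ref{T:multijoint}, by realizing each simply connected component of $M$ as a nilmanifold $G/H$ and passing to polynomial coordinates in which all hypotheses transfer.

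Since the curves in $\scriptL_j$ and any incidences respect components, I first reduce to the case that $M$ is connected. Let $G$ denote the simply connected nilpotent Lie group with Lie algebra $\mathfrak g$. The spanning hypothesis gives a locally transitive $\mathfrak g$-action on $M$; after verifying completeness of the relevant flows (which is forced by transitivity together with nilpotency), Palais's integrability theorem combined with the simple-connectedness of $G$ produces a smooth transitive action $G\times M\to M$. Fixing a base point $x_0\in M$, I identify $M$ with $G/H$, where $H$ is the stabilizer, which is necessarily closed, and connected because $G$ and $M$ are both simply connected.

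Next I construct polynomial coordinates and the required fibrations. The pointwise condition $X_j(x)\notin[\mathfrak g,\mathfrak g]_x$ translates to $X_j\notin\mathfrak h+[\mathfrak g,\mathfrak g]$ at the Lie algebra level, using that $\mathrm{Ad}$ acts trivially on $\mathfrak g/[\mathfrak g,\mathfrak g]$ so that the image of $\mathfrak h_x$ in this quotient is independent of $x$. I then extend $X_1$ to a strong Malcev basis $(Y_1,\ldots,Y_N)$ of $\mathfrak g$ through $\mathfrak h$, with $Y_1=X_1$ and $Y_{n+1},\ldots,Y_N$ spanning $\mathfrak h$. The associated exponential map of the second kind,
\[
(t_1,\ldots,t_n)\longmapsto \exp(t_1Y_1)\cdots\exp(t_nY_n)\cdot x_0,
\]
is a global diffeomorphism $\mathbb R^n\to M$ in which $X_1=\partial_{t_1}$, so $\pi_1(t_1,\ldots,t_n):=(t_2,\ldots,t_n)$ is a linear polynomial map with $X_1\pi_1\equiv 0$. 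The Baker--Campbell--Hausdorff formula, which terminates because $\mathfrak g$ is nilpotent, ensures that the flow of every element of $\mathfrak g$, in particular of $X_2$, is polynomial in these coordinates. Repeating the construction with a second strong Malcev basis having $Y_1'=X_2$ produces alternate polynomial coordinates $(s_1,\ldots,s_n)$ in which $X_2=\partial_{s_1}$; since the transition between any two exponential coordinate systems on a simply connected nilpotent Lie group is polynomial, writing $(s_2,\ldots,s_n)$ back in the $t$-coordinates yields a polynomial $\pi_2:\mathbb R^n\to\mathbb R^{n-1}$ with $X_2\pi_2\equiv 0$. All hypotheses of Theorem~\ref{T:multijoint} are now in force, and \eqref{E:multijoints in M} follows at once.

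The main obstacle is the Lie-theoretic step: verifying the existence of a strong Malcev basis of the prescribed type (through $\mathfrak h$ with first vector equal to $X_j$) and the global diffeomorphism property of the resulting parameterization. The hypothesis $X_j\notin\mathfrak h+[\mathfrak g,\mathfrak g]$ is precisely what allows $X_j$ to be placed first in such a basis, and standard structure theory of simply connected nilpotent Lie groups (Malcev, Nomizu) then yields the diffeomorphism; a secondary point requiring care is confirming that the $\mathfrak g$-action integrates globally, which uses completeness and simple-connectedness but does not need the polynomial-flow hypothesis of the Euclidean theorem as an input.
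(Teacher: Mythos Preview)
Your reduction has a genuine gap: completeness of the flows of elements of $\mathfrak g$ on $M$ does not follow from the stated hypotheses, so Palais's theorem need not apply and the identification $M\cong G/H$ can fail. For instance, take $M=\R^3\setminus\{0\}$ (simply connected) with the Heisenberg fields $X,Y$; every hypothesis of the theorem holds, yet the flow of $X$ through any point on the punctured $x$-axis reaches the missing origin in finite time, and $M$ is plainly not a homogeneous space for the Heisenberg group. More seriously, the natural map $\Phi:M\to\R^n$ need not even be injective: let $M$ be the universal cover of $\R^3\setminus\{y=t=0\}$, equipped with the lifted Heisenberg fields. All hypotheses hold on this simply connected $M$, but $\Phi$ is the $\Z$-sheeted covering map composed with inclusion, so distinct curves in $\scriptL_1$ may have the same image in $\R^3$ and hence the same projection under $\hat\pi_1$. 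In that situation you cannot simply invoke Theorem~\ref{T:multijoint}.

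This is precisely the difficulty the paper's proof is built to handle. Lemma~\ref{L:Lie algebra} constructs $\Phi$ only as a \emph{local} diffeomorphism, and the partitioning step is then carried out on $\hat\pi_1\circ\Phi(\scriptL_1)$ via a weighted Guth--Katz lemma (Lemma~\ref{L:weighted Guth-Katz}), with each projected point weighted by its number of preimages in $\scriptL_1$; Proposition~\ref{P:bilinear poly} is likewise reproved on $M$ rather than quoted. Your Malcev-basis construction and your translation of the pointwise hypothesis $X_j(x)\notin[\mathfrak g,\mathfrak g]_x$ into $X_j\notin\mathfrak h+[\mathfrak g,\mathfrak g]$ are correct and match the content of Lemma~\ref{L:Lie algebra}; it is the global-diffeomorphism claim preceding them that does not go through.
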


Here we use the notation $[\frakg,\frakg]_x$ to denote the vector space
$$
\{[X,Y](x) : X,Y \in \frakg\}.
$$
We note that $n \leq N$ by the hypothesis that the H\"ormander condition holds.

Our main tool in proving both results is a lifting argument of $\frak g$ to a subset of $\scriptX(\R^n)$ whose elements have polynomial flows.  While the result we need is nearly \cite[Proposition 8.7]{CDSS}, which in turn utilized heavily results from \cite[Chapter 1]{Corwin}, we do not have a result with matching hypotheses that we can directly quote, so we give an essentially self-contained argument for the convenience of the reader.  Before stating our lemma, we note that if, under the hypotheses of Theorem~\ref{T:multijoints in M}, we let $\scriptV$ denote the linear span of $X_1,X_2$, then the hypotheses of Theorem~\ref{T:joints in M} hold, in addition to the hypothesis that $X_1,X_2$ do not lie in the $C^\infty$ module generated by of $\frak g^{(2)}$.  (A pair for which this conditions fails is $\tfrac\partial{\partial x},x\tfrac{\partial}{\partial y} \in \scriptX(\R^2)$.) 

\begin{lemma}\label{L:Lie algebra}
Under the hypotheses of Theorem~\ref{T:joints in M}, there exists a local diffeomorphism $\Phi:M \to \R^n$ such that the pushforwards $\Phi_*X$, $X \in \frak g$ extend to all of $\R^n$ and, moreover, the extensions have polynomial flows.  Under the additional hypotheses of Theorem~\ref{T:multijoints in M}, for $j=1,2$, there exists a choice, depending on $j$, of $\Phi$ such that $\Phi_*X_j = \tfrac{\partial}{\partial x_1}$. 
\end{lemma}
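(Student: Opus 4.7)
My plan is to adapt classical constructions from the theory of nilpotent Lie algebras acting on manifolds (as employed in \cite{CDSS}) to realize $M$ as a quotient of the simply connected Lie group $G$ integrating $\mathfrak g$, and then pull back exponential coordinates of the second kind.

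First, I would construct a \emph{strong Malcev basis} $Y_1, \ldots, Y_N$ of $\mathfrak g$: a basis adapted to the descending central series so that each $\mathrm{span}(Y_k, \ldots, Y_N)$ is an ideal of $\mathfrak g$. Such a basis exists by nilpotency, built by lifting bases through the successive quotients $\mathfrak g^{(j)}/\mathfrak g^{(j+1)}$. Fixing $x_0 \in M$, since the evaluations $\{Y_j(x_0)\}_j$ span $T_{x_0}M$, a greedy selection from this list produces indices $i_1 < \cdots < i_n$ so that $Y_{i_1}(x_0), \ldots, Y_{i_n}(x_0)$ is a basis of $T_{x_0}M$. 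For the bilinear refinement, the hypothesis $X_j(x) \not\in [\mathfrak g, \mathfrak g]_x$ ensures $X_j$ has nonzero image in the abelianization $\mathfrak g/[\mathfrak g,\mathfrak g]$, so the Malcev basis may be chosen to place $X_j$ first, i.e.\ $Y_{i_1}=X_j$.

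Next, I would define $\Phi^{-1}:\R^n \to M$ by exponential coordinates of the second kind,
\[
\Phi^{-1}(x_1, \ldots, x_n) := e^{x_1 Y_{i_1}} \circ e^{x_2 Y_{i_2}} \circ \cdots \circ e^{x_n Y_{i_n}}(x_0),
\]
and verify via the inverse function theorem that it is a local diffeomorphism near the origin. To upgrade this to a globally defined local diffeomorphism onto a simply connected component of $M$, I would pass through the simply connected nilpotent Lie group $G$ with Lie algebra $\mathfrak g$, for which the analogous formula with left-invariant vector fields yields a global diffeomorphism $\R^N \cong G$. Since $\mathfrak g$ spans $TM$ everywhere, the infinitesimal $\mathfrak g$-action is locally transitive, and using simply connectedness of components of $M$ one integrates the local action into a global $G$-action via monodromy; any component of $M$ is then identified with a homogeneous space $G/H$ where $\mathrm{Lie}(H)$ is complementary to $\mathrm{span}(Y_{i_1},\ldots,Y_{i_n})$. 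The map $\Phi^{-1}$ factors through the projection $G \to G/H \cong M$, producing a global local diffeomorphism whose local inverses define $\Phi:M\to\R^n$.

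Finally, I would verify that $\Phi_* Y$ is a polynomial vector field with polynomial flow for each $Y \in \mathfrak g$. This is the payoff of working in exponential coordinates of the second kind: by the Baker--Campbell--Hausdorff formula, which terminates at step $N$ by nilpotency, the group multiplication in $G$ is expressed as a polynomial of degree at most $N$ in the coordinates, and so both the action of left-invariant vector fields and the resulting one-parameter flows are polynomial. The identity $\Phi_* X_j = \partial/\partial x_1$ in the bilinear case is then immediate from the choice $Y_{i_1} = X_j$, since differentiating $\Phi^{-1}$ in $x_1$ recovers the flow of $X_j$. The principal technical obstacle is the globalization step, in which one uses simply connectedness and the integrability of the Lie algebra to pass from a local diffeomorphism near $x_0$ to a globally defined object on $M$; once this is achieved, the polynomial character of the pushforwards is essentially a structural consequence of the nilpotent group underpinning the construction.
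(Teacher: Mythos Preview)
Your overall architecture---Malcev basis, Baker--Campbell--Hausdorff, realizing $M$ as a quotient of the simply connected nilpotent group $G$---matches the paper's. But there is a genuine gap in the globalization step, and it stems from the direction in which you try to construct $\Phi$.

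You define $\Phi^{-1}:\R^n\to M$ by $\Phi^{-1}(x)=e^{x_1Y_{i_1}}\cdots e^{x_nY_{i_n}}(x_0)$ and then attempt to globalize by integrating the infinitesimal $\mathfrak g$-action to a global $G$-action on $M$. The problem is that neither step is available without completeness of the flows on $M$, which is not assumed (think of $M=(0,1)^n$ with the coordinate vector fields). Palais' integration theorem requires completeness, and simple connectedness of $M$ does not rescue this: there is no global $G$-action on $(0,1)^n$. The paper explicitly flags this issue (``we do not assume that flows in $M$ of elements of $\scriptV$ are global'') and handles it by reversing the direction. Rather than defining a map $\R^n\to M$, the paper defines $\Phi:M\to\R^n$ directly: given $x\in M$, Chow's theorem produces $X_1,\ldots,X_m\in\mathfrak g$ with $e^{X_m}\cdots e^{X_1}(x_0)=x$, and one sets $\Phi(x):=e^{\hat X_m}\cdots e^{\hat X_1}(0)$ in $\R^n$, where the hats denote the pushforwards to $G/Z\cong\R^n$. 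The flows on the $\R^n$ side are polynomial and hence globally defined, so this is always well-defined; simple connectedness of $M$ is then used (as a monodromy argument) only to show that $\Phi(x)$ is independent of the choice of path. This is essentially the developing map of a $(G,G/Z)$-structure, and it goes the correct way.

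A secondary difference: the paper builds a \emph{weak Malcev basis through the isotropy subalgebra} $\mathfrak z=\{X\in\mathfrak g:X(x_0)=0\}$, so that the last $N-n$ basis elements span $\mathfrak z$ exactly and the quotient $G/Z$ is given by projection onto the first $n$ coordinates. Your greedy selection from a strong Malcev basis adapted to the central series does not guarantee that the unselected basis elements span $\mathfrak z$, so the identification of $\R^n$ with $G/H$ via your coordinates requires further justification. The paper's construction, via the chain of normalizers of $\mathfrak z$, builds the right basis directly.
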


\begin{proof}[Proof of Lemma~\ref{L:Lie algebra}]
As mentioned, we bring together arguments from \cite{CDSS,Corwin}. Without loss of generality, we may assume that $M$ is connected.  

Fix a point $x_0 \in M$, and let $\frak z$ denote the Lie subalgebra of $\frak g$ consisting of those $X \in \frak g$ with $X(x_0) = 0$.  By the H\"ormander condition, $\dim\frak z = N-n$.  

By nilpotency, we may define a chain of subalgebras,
$$
\frak g_M:=\frak z < \frak g_{M-1} < \cdots < \frak g_1:=\frak g,
$$
by setting
$$
\frak g_{j-1}:=\{X \in \frak g : [X,\frak g_j] \in \frak g_j\},
$$
and then reindexing as needed.
We may thus construct a basis
$$
\{Y_1,\ldots,Y_N\}
$$
for $\frak g$ such that 
$$
\{Y_{k_j+1},\ldots,Y_{N}\}
$$
is a basis for $\frak g_j$, where $N-k_j:=\dim\frak g_j$.  By construction, the span of $\{Y_k,\ldots,Y_N\}$ is a Lie subalgebra for each $k$.  (This is called a weak Malcev basis of $\frak g$ through $\frak z$ \cite[Theorem~1.1.13]{Corwin}.)  

The Lie group $G:=\exp \frak g$ is $N$ dimensional and may be given coordinates
$$
(t_1,\ldots,t_N) \mapsto e^{t_1Y_1}e^{t_2Y_2} \cdots e^{t_NY_N}(0).
$$
Multiplication and, thus, exponentiation by elements in $\frak g$ are polynomial of degree at most $N$ in these coordinates by the Baker--Campbell--Hausdorff forumla \cite[Proposition~1.2.8]{Corwin}.  Furthermore, $Z:=\exp \frak z$ has an especially nice expression: 
$$
Z = \{(0',t'') : t'' \in \R^{N-n}\},
$$
and the quotient map $\Pi:G \to G/Z$ is given by 
$$
(t',t'') \mapsto t'.
$$
Quotienting by $Z$ on the right respects left multiplication, so there is a well-defined pushfoward $\Pi_*: \frak g \to \scriptX(\R^n)$ on the left-invariant vector fields, which is also a Lie algebra homomorphism \cite[Lemma~8.2]{CDSS}.  

Let $\hat{\frak g}:=\Pi_*\frak g$, a Lie subalgebra of $\scriptX(\R^n)$.  We denote by $\hat X$ the image of $X$ under $\Pi_*$.  The flows of elements of $\hat{\frak g}$ are, as  compositions of polynomials, polynomial, and, since $\Pi$ is a submersion and $\frak g$ spans $T_xG$ at every point, $\hat {\frak g}$ obeys the H\"ormander condition.  

Now we construct our local diffeomorphism $\Phi:M \to \R^n$.  (It is in this argument that there is some deviation from the argument in \cite{CDSS}, since we do not assume that flows in $M$ of elements of $\scriptV$ are global.)  By our assumption that the H\"ormander condition holds on $M$ and Chow's theorem \cite{chow}, for any $x \in M$, there exist $m$ and $X_1,\ldots,X_m \in \frak g$ such that the iterated flow
$$
e^{X_m}(e^{X_{m-1}}(\cdots(e^{X_1}(x_0))\cdots))
$$
is defined (in $M$) and equals $x$.  Since $M$ is simply connected, the map 
$$
\Phi(x):=e^{\hat X_m}(e^{\hat X_{m-1}}(\cdots(e^{\hat X_1}(0)\cdots))),
$$
is well-defined, even though $x \mapsto (X_1,\ldots,X_m)$ most certainly is not.  By the H\"ormander condition, $\Phi_*$ is surjective, and so $\Phi$ is a local diffeomorphism.  Furthermore, $\Phi_*X = \hat X$, for every $X \in \frak g$.   

Having proved the lemma under the hypotheses of Theorem~\ref{T:joints in M}, we add the remaining hypotheses of Theorem~\ref{T:multijoints in M}.  We fix $j\in\{1,2\}$ and  observe that 
$$
\frak h:=\frak z + \frak g^{(2)}
$$
is a Lie subalgebra (actually, an ideal) of $\frak g$ that does not contain $X_j$.  We may thus construct a weak Malcev basis first of $\frak h$ through $\frak z$, and then extend to a weak Malcev basis of $\frak g$ through $\frak z$ whose first element is $Y_1:=X_j$.  In these coordinates, $X_j = \tfrac{\partial}{\partial x_1}$, as claimed.
\end{proof}

\begin{proof}[Proof of Theorem~\ref{T:joints in M}]
By convexity (i.e., $\frac n{n-1}>1$), we may assume that $M$ is connected.  

Since $\Phi$ might fail to be one-to-one if the flows of vector fields in $\frak g$ are not complete in $M$, we cannot directly apply Theorem~\ref{T:joints}.  However, it is possible to reprove the analogue of  Lemma~\ref{L:Quilodran}, using our lifting $\Phi$.  

To this end, let us suppose that $\emptyset \neq \scriptJ' \subseteq \scriptJ$ has the properties that
\begin{center}
    If $\ell \in \scriptL$ intersects $\scriptJ'$, then $\#\ell \cap \scriptJ' \geq m$; and
    $\#\scriptJ' \ll m^n$.
\end{center} 
By assumption, $\#\hat\scriptJ' \ll m^n$, where $\hat\scriptJ':=\Phi(\scriptJ')$.  Hence, there exists a polynomial $0 \not\equiv Q$, with $\deg Q \ll m$, and $Q \equiv 0$ on $\hat\scriptJ'$.  Let $\scriptL'$ denote the set of lines intersecting $\scriptJ'$ and let $\hat\scriptL':=\Phi_*(\scriptL')$.  By construction, $\ell \in \scriptL'$ cannot be a singleton.  Therefore, $\hat \ell:=\Phi\circ\ell$ is a nonconstant polynomial flow, and thereby one-to-one, so it intersects at least $m$ points of $\hat\scriptJ'$.  Therefore, $Q$ vanishes on $\hat\ell$ at least $m > \deg(Q \circ \hat l)$ times, so $Q \equiv 0$ on $\ell$.  Letting $\hat \scriptV:=\Phi_*(\scriptV)$, at any point of $\hat\scriptJ'$, $\hat XQ = 0$ for a $\hat\scriptV$-spanning set of vector fields, whence $\hat XQ = 0$ for every $\hat X \in \scriptV$.  The rest of the proof of Lemma~\ref{L:Quilodran} proceeds exactly as before.  The second part of the proof of Theorem~\ref{T:joints} (removing bad curves) may be carried out on $M$, since we do not need special coordinates.
\end{proof}

In proving Theorem~\ref{T:multijoints in M}, we will need a weighted version of the Guth--Katz polynomial partitioning argument (proved by a straightforward adaptation of the proof of \cite[Theorem~4.1]{guth2015erdHos}).
This is because, though $\hat\pi_1$ is a polynomial, $\hat\pi_1 \circ \Phi$ is not, so $\hat\pi_1 \circ\Phi$ might not be bounded-to-1 on $\scriptL_1$.  

\begin{lemma}[\cite{guth2015erdHos}] \label{L:weighted Guth-Katz}
Let $\scriptP$ be a finite collection of points in $\R^d$ and let $\mu$ be a finite Borel measure supported on $\scriptP$.  Then for each $D \geq 1$, there exists a polynomial $q \not\equiv 0$ with $\deg q \leq D$ such that 
\begin{equation} \label{E:decomp Rd}
\R^d\setminus Z^q = \bigcup_{\alpha \in \scriptA} \scriptO^\alpha,
\end{equation}
with $Z^q:=\{q=0\}$,  the $\scriptO^\alpha$ disjoint open sets, and $\#\scriptA \lesssim D^{-d}$; and
$$
\mu(\scriptO^\alpha) \lesssim D^{-d}\mu(\scriptP), \qquad \alpha \in \scriptA.
$$
\end{lemma}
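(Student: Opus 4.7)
The plan is to follow the proof of the Guth--Katz polynomial partitioning theorem \cite[Theorem~4.1]{guth2015erdHos} essentially verbatim, replacing counting measure by the given finite Borel measure $\mu$ throughout; the argument is measure-agnostic provided one invokes the appropriate form of the polynomial ham sandwich theorem.

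First I would record the following version of ham sandwich: for any finite Borel measures $\nu_1,\ldots,\nu_m$ on $\R^d$, there exists $h \not\equiv 0$ of degree $\lesssim m^{1/d}$ satisfying
\[
\nu_i(\{h>0\}),\ \nu_i(\{h<0\}) \le \tfrac12 \nu_i(\R^d), \qquad i=1,\ldots,m.
\]
The proof is standard: the space of polynomials of degree $\le k$ on $\R^d$ has dimension $\binom{k+d}{d} \gtrsim k^d$, which exceeds $m$ once $k \gtrsim m^{1/d}$; applying Borsuk--Ulam to the continuous odd map $h \mapsto \bigl(\nu_i(\{h>0\}) - \nu_i(\{h<0\})\bigr)_{i=1}^m$ on the unit sphere of this space produces an $h$ with $\nu_i(\{h>0\}) = \nu_i(\{h<0\})$ for every $i$, from which the desired inequality follows since $\nu_i(\{h>0\}) + \nu_i(\{h<0\}) \le \nu_i(\R^d)$. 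No atomicity hypothesis on the $\nu_i$ is needed, only finiteness.

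Next I would iterate this dyadically. Choose $k$ with $2^k$ comparable to $D^d$, set $q_0 \equiv 1$, and assume inductively that after stage $j$ the complement of $Z^{q_1 \cdots q_j}$ has been decomposed into at most $2^j$ disjoint open cells, each of $\mu$-mass at most $2^{-j}\mu(\scriptP)$. Applying the ham sandwich statement above to the $2^j$ restrictions of $\mu$ to these cells produces a polynomial $q_{j+1}$ of degree $\lesssim 2^{j/d}$ that simultaneously bisects all of them, after which the complement of $Z^{q_1 \cdots q_{j+1}}$ consists of at most $2^{j+1}$ open cells, each of $\mu$-mass at most $2^{-(j+1)}\mu(\scriptP)$. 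This completes the induction.

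Finally I would set $q := q_1 \cdots q_k$, so that
\[
\deg q \ \lesssim\ \sum_{j=0}^{k-1} 2^{j/d} \ \lesssim\ 2^{k/d} \ \lesssim\ D,
\]
the number of cells $\scriptO^\alpha$ of $\R^d \setminus Z^q$ is at most $2^k \lesssim D^d$, and each cell satisfies $\mu(\scriptO^\alpha) \le 2^{-k}\mu(\scriptP) \lesssim D^{-d}\mu(\scriptP)$, as required. There is no genuine obstacle here; the single point worth checking is that the ham sandwich theorem requires only finiteness (not atomicity) of the measures to which it is applied, which is automatic since the restriction of the finite Borel measure $\mu$ to any Borel set is again a finite Borel measure.
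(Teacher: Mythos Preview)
Your overall strategy---iterate a ham-sandwich bisection roughly $\log_2(D^d)$ times and take the product polynomial---is exactly what the paper does, and the degree and cell-count bookkeeping is fine. The gap is in the ham-sandwich step itself. You assert that
\[
h \longmapsto \bigl(\nu_i(\{h>0\}) - \nu_i(\{h<0\})\bigr)_{i=1}^m
\]
is a continuous odd map on the unit sphere of the polynomial space, so that Borsuk--Ulam applies directly, and you explicitly claim that no atomicity hypothesis is needed. But for atomic $\nu_i$ this map is \emph{not} continuous: if $\nu$ is a unit mass at a point $p$ and $h$ is a polynomial with $h(p)=0$, then arbitrarily small perturbations of $h$ can move $p$ into either $\{h>0\}$ or $\{h<0\}$, so the map jumps by $\pm 1$ at $h$. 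In the setting of the lemma $\mu$ is supported on a finite set, so the measures you must bisect at each stage are purely atomic and this obstruction is unavoidable.

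This is precisely the one point the paper's sketch isolates. Rather than applying Borsuk--Ulam to the atomic measures, one replaces the atom at each $p$ of mass $\mu_p$ by Lebesgue measure on the ball of radius $\varepsilon\mu_p^{1/d}$ centered at $p$; Stone--Tukey applies to these absolutely continuous measures, yielding a bisecting polynomial $h_\varepsilon$ of bounded degree, and one passes to a limit along a subsequence as $\varepsilon\searrow 0$ (the inequalities $\nu_i(\{h>0\})\le\tfrac12\nu_i(\R^d)$ survive the limit even though the corresponding equalities need not). The radius scaling $\varepsilon\mu_p^{1/d}$ makes the ball volumes proportional to the weights $\mu_p$, which is the sole modification needed over the unweighted argument in \cite{guth2015erdHos}. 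Once you insert this approximation into your ham-sandwich paragraph, the rest of your argument goes through unchanged.
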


\begin{proof}[Sketch proof of Lemma~\ref{L:weighted Guth-Katz}]
Theorem 4.1 of \cite{guth2015erdHos} directly implies the lemma when $\mu$ is counting measure on $\scriptP$.  We briefly note the changes needed to obtain the result for a general finitely supported measure.  The main step from the original proof is \cite[Corollary 4.4]{guth2015erdHos}.  To generalize it, we need to know that, given $M$ finite sets, $\scriptP_1,\ldots,\scriptP_M$, and a finite Borel measure $\mu$ on $\bigcup_j\scriptP_j$, there exists $0 \not\equiv q$, a polynomial of degree $\deg q \lesssim M^{1/d}$, such that, for each $j$,
$$
\mu(\{q > 0\} \cap \scriptP_j) \leq \tfrac12\mu(\scriptP_j), \qquad \mu(\{q < 0\}\cap \scriptP_j) \leq \tfrac12\mu(\scriptP_j).
$$
In its original form, \cite[Corollary~4.4]{guth2015erdHos} was proved by applying the ``polynomial ham sandwich theorem'' of Stone--Tukey \cite{StoneTukey} to shrinking balls centered at the points of the $\scriptP_j$ and taking a limit.  To allow for a general discrete measure, we need only replace the ball centered at $p \in \scriptP_j$ of radius $\eps \searrow 0$ with a ball centered at $p \in \scriptP_j$ of radius $\eps\mu_p^{1/d}$, with $\eps \searrow 0$.  The remainder of the proof is (word-for-word) the same as the original.
\end{proof}

\begin{proof}[Proof of Theorem~\ref{T:multijoints in M}]
We begin by verifying that the simple estimate, inequality \eqref{E:simple est}, holds.  

By symmetry of the hypotheses on $X_1,X_2$, it suffices to verify that \eqref{E:bilinear poly} holds whenever $\hat \pi_1(\hat\scriptL_1)$ is contained in a degree $D$ variety.  Here $\hat \scriptL_1:=\Phi_*(\scriptL_1)$, and we may assume that $\hat X_1 =\tfrac{\partial}{\partial x}$ and  $\hat \pi_1(x_1,x') = x'$. If every $\ell_1 \in \scriptL_1$ contains at least $C$ incidences and every $\ell_2 \in \scriptL_2$ contains at least $CD$ incidences, then (since the pushforwards of the $\ell_j$ are one-to-one, as our hypotheses imply $X_j \neq 0$) every $\hat\ell_1 \in \hat\scriptL_1$ contains at least $C$ incidences and every $\hat \ell_2 \in \hat\scriptL_2$ contains at least $CD$ incidences, at which point we can arrive at a contradiction by exactly the same argument as in the proof of Claim~\ref{L:bad line}.  Once we learn that failure of \eqref{E:bilinear poly}, i.e., \eqref{E:bilin poly false}, implies the existence of a bad curve in $\scriptL_1 \cup \scriptL_2$, we may follow the same stopping time argument as in the proof of Proposition~\ref{P:bilinear poly}, removing bad curves until we arrive at a contradiction.  

We turn now to the adaptation of the main portion of the proof of Theorem~\ref{T:multijoint} to the manifold setting.  We let $\hat\scriptP_1:=\hat\pi_1 \circ \Phi(\scriptL_1)$, a finite collection of points in $\R^{n-1}$, and we define a measure on $\hat\scriptP_1$ by letting $\mu_p$ equal the number of preimages of $p$ in $\scriptL_1$.  Let 
$$
D:=\max\{1,\#\scriptL_1^{\frac{n-1}{2n-3}}\#\scriptL_2^{-\frac{n-2}{2n-3}}\},
$$
and let $q$ be the partitioning polynomial guaranteed by Lemma~\ref{L:weighted Guth-Katz}, with $d=n-1$ and degree bounded by $D$.  

The decomposition \eqref{E:decomp Rd} induces 
$$
\scriptL_j = \scriptL_j^Q \cup \bigcup_{\alpha \in\scriptA}\scriptL_j^\alpha,
$$
in the expected way, with $\scriptL_j^Q$ denoting the set of curves in $\scriptL_j$ along which $Q:=q \circ \hat\pi_1 \circ \Phi$ vanishes at some point and $\scriptL_j^\alpha$ denoting the curves in $\scriptL_j$ that intersect $\Phi^{-1}(\hat\pi_1^{-1}(\scriptO^\alpha))$.  We obtain once more the decomposition of incidences,  \eqref{E:incidence decomp}.

The single-cell estimate \eqref{E:one cell} follows directly from \eqref{E:simple est}, which we have just verified.  An $\ell_2$ that is contained in $\scriptL_2^\alpha$ for $\gg D$ values of $\alpha \in\scriptA$ pushes forward via $\hat\pi_1\circ\Phi$ to a polynomial curve of bounded degree along which $q$ vanishes many times, and which is therefore contained in $Z^q$.  This implies \eqref{E:L2alpha}.  For inequality \eqref{E:L1alpha stuff}, we need the additional ingredient that
$$
\#\scriptL_1^\alpha = \mu(\hat\scriptP_1 \cap \scriptO^\alpha) \lesssim D^{-(n-1)}\mu(\hat\scriptP_1) = D^{-(n-1)}\#\scriptL_1.
$$
With our chosen value of $D$, the cellular estimate \eqref{E:cells ok} follows.  

Finally, the bound for the ``wall contribution'' follows from inequality \eqref{E:bilinear poly}, previously verified.  
\end{proof}

\printbibliography 


\end{document}